\newcommand{\mycolim}[1]{\mathbin{\operatorname*{colim \hspace{1pt}}_{#1}^{}}}
\theoremstyle{plain}
\newtheorem{theorem}{Theorem}[section]
\newtheorem{lemma}[theorem]{Lemma}
\newtheorem{proposition}[theorem]{Proposition}
\newtheorem{corollary}[theorem]{Corollary}
\newtheorem*{proposition*}{Proposition}
\newtheorem*{corollary*}{Corollary}
\newtheorem*{theorem*}{Theorem}
\theoremstyle{plain}
\newcounter{zaehler}
\newtheorem*{introcor*}{Corollary}
\theoremstyle{definition}
\newtheorem{definition}[theorem]{Definition}
\newtheorem{remark}[theorem]{Remark}
\newtheorem{construction}[theorem]{Construction}
\newtheorem{notation}[theorem]{Notation}
\DeclareMathOperator{\sSeq}{sSeq}
\DeclareMathOperator{\cof}{cof}
\DeclareMathOperator{\CE}{CE}
\DeclareMathOperator{\forget}{forget}
\DeclareMathOperator{\triv}{triv}
\DeclareMathOperator{\colim}{colim}
\newcommand{\E}{\mathbb{E}}
\newcommand{\free}{\mathrm{free}}
\newcommand{\Barc}{\mathrm{Bar}}
\newcommand{\Alg}{\mathrm{Alg}}
\renewcommand{\nu}{\mathrm{nu}}
\newcommand{\End}{\mathrm{End}}
\newcommand{\id}{\mathrm{id}}
\newcommand{\gr}{\mathrm{gr}}
\newcommand{\Sp}{\mathrm{Sp}}
\newcommand{\Monad}{\mathrm{Monad}}
\newcommand{\Fun}{\mathrm{Fun}}
\newcommand{\Fil}{\mathrm{fil}}
\newcommand{\FB}{\mathrm{Fin}^\simeq}
\newcommand{\Op}{\mathrm{Op}}
\newcommand{\Ass}{\mathbf{Ass}}
\newcommand{\Com}{\mathbf{Com}}
\newcommand{\lev}{\otimes_{\mathrm{lev}}}
\begin{document}

\title{Poincar\'{e}--Birkhoff--Witt theorems in higher algebra}

\author[O.~Antolin Camarena]{Omar Antolin Camarena} 
\address{National Autonomous University of Mexico }
\email{omar@matem.unam.mx}

\author[D.L.B.~Brantner]{Lukas Brantner} 
\address{Oxford University, Universit\'{e} Paris–Saclay (CNRS)}
\email{lukas.brantner@maths.ox.ac.uk, lukas.brantner@universite-paris-saclay.fr}

\author[G.S.K.S.~Heuts]{Gijs Heuts} 
\address{Utrecht University }
\email{g.s.k.s.heuts@uu.nl}

\begin{abstract}
We extend the classical Poincar\'{e}--Birkhoff--Witt theorem to higher algebra by establishing a version that applies to spectral Lie algebras. We deduce this statement from a basic relation between operads in spectra: the commutative operad is the quotient of the associative operad by a right action of the spectral Lie operad. This statement, in turn, is a consequence of a fundamental relation between different $\E_n$-operads, which we articulate and prove. We deduce a variant of the Poincar\'{e}--Birkhoff--Witt theorem for  relative enveloping algebras of $\E_n$-algebras. Our methods also give a simple construction and description of the higher enveloping $\E_n$-algebras of a spectral Lie algebra.
\end{abstract}

\maketitle 

\tableofcontents

\section{Introduction}

The classical Poincar\'{e}--Birkhoff--Witt theorem states that the universal enveloping algebra of a Lie algebra $\mathfrak{g}$ admits a natural filtration whose associated graded is the symmetric algebra on the underlying vector space of $\mathfrak{g}$. This statement may be phrased in the language of operads; it is essentially equivalent to the fact that the quotient of the associative operad by the right action of the Lie operad on it is the commutative operad. 

In this paper, we provide an extension of this statement to the world of higher algebra (see \Cref{thm:LieAssCom}), replacing the classical associative, Lie, and commutative operads by their lifts to the $\infty$-category of spectra. Correspondingly, we deduce a Poincar\'{e}--Birkhoff--Witt theorem for spectral Lie algebras (\Cref{PBWfull}). In fact, these results are consequences of a more basic relation between different $\E_n$-operads, namely \Cref{thm:En}, which has no `classical' analog. In particular we deduce a result in the style of Poincar\'{e}--Birkhoff--Witt for $\E_n$-algebras, which we make explicit in \Cref{PBWEn}. Finally, our methods provide a straightforward construction of the higher enveloping algebra of a spectral Lie algebra $\mathfrak{g}$. In \Cref{thm:Un} we show that the $\E_n$-enveloping algebra of $\mathfrak{g}$ may be calculated as the Chevalley--Eilenberg homology of the $n$-fold loop object of $\mathfrak{g}$, showing that our construction agrees with the one of  Knudsen \cite{knudsen2018higher} which relies on factorisation homology.

We will now review the results of this paper in more detail. The associative and commutative operads have evident lifts to the world of higher algebra; indeed, they can easily be defined in the $\infty$-category of spectra (or any symmetric monoidal $\infty$-category with coproducts). The case of the Lie operad is less straightforward, but Salvatore \cite{salvatore1998configuration} and Ching \cite{ching2005bar}  have shown that it also admits a lift to the $\infty$-category of spectra. Indeed, if we define $\mathbb{L}$ to be the Koszul dual $K(\mathbf{Com})$ of the commutative operad, then the operadic suspension $s\mathbb{L}$ (see \Cref{sec:symseq}) is an operad whose homology is concentrated in degree 0 and reproduces the classical Lie operad in abelian groups. Moreover, this construction recovers the more classical Koszul duality between the commutative and Lie operads in the derived  $\infty$-category $\mathrm{Mod}_R$ of any commutative ring $R$. 

To explain our first result, recall that operads in the $\infty$-category $\Sp$ of spectra may be defined as associative algebra objects in the $\infty$-category of symmetric sequences in spectra (cf.\ \cite{kelly2005operads, trimblenotes} \cite[4.1.2]{brantnerthesis}). 
 The  monoidal structure is given by the \emph{composition product} (see \Cref{sec:symseq} for a brief review). In particular, given a map of operads $\mathcal{O} \to \mathcal{P}$, we may  interpret $\mathcal{P}$ as an $\mathcal{O}$-bimodule in symmetric sequences.

\begin{notation} In what follows, we will denote the associative operad $\Ass$ by $\E_1$ and the commutative operad $\Com$ by $\E_\infty$, to make the notation consistent with our later results on $\E_n$-operads. We take all of our operads to be \emph{nonunital} (without emphasising this in the notation), meaning that their term of 0-ary operations is trivial. We will denote by $\mathbf{1}$ the trivial operad, which has a unit in arity 1 and nothing else. This is also the nonunital $\E_0$-operad.
\end{notation}  

We can now state our first main theorem:
\begin{theorem} \label{PBWtriv}
\label{thm:LieAssCom}
There is a commutative square of operads in the $\infty$-category $\Sp$ as follows:
\[
\begin{tikzcd}
s\mathbb{L} \ar{d}\ar{r} & \mathbf{1} \ar{d} \\
\E_1 \ar{r} & \E_\infty.
\end{tikzcd}
\]
Moreover, the induced map $\E_1 \circ_{s\mathbb{L}} \mathbf{1} \to \E_\infty$ is an equivalence of left $\E_1$-modules.
\end{theorem}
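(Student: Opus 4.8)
The plan is to deduce this from the $\E_n$-relation (\Cref{thm:En}) in the limiting case $n = \infty$, but it is worth first sketching the direct route since it clarifies where the real content lies. We must do two things: (1) produce the commutative square of operads, and (2) show the comparison map of left $\E_1$-modules is an equivalence.

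For step (1), the left vertical map $s\mathbb{L} \to \E_1$ is the well-known ``Lie is a suboperad of associative after operadic suspension'' map, realising the spectral Lie operad as acting on the right of $\E_1$; concretely this is adjoint to the Koszul-duality datum, using that $s\mathbb{L} = s K(\E_\infty)$ and that Koszul duality supplies a canonical map $K(\E_\infty) \circ \E_1 \to \E_1$ (equivalently, the bar construction $\Barc(\E_1)$ receives a map from $\E_\infty$-comonadic data). The top map $s\mathbb{L} \to \mathbf{1}$ is the augmentation, killing everything in arity $\geq 2$; this exists because $s\mathbb{L}$ is augmented over $\mathbf{1}$ (it has a single operation in arity $1$). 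The bottom map $\E_1 \to \E_\infty$ is the standard operad map, and the right map $\mathbf{1} \to \E_\infty$ is the unit inclusion of the trivial operad. Commutativity of the square amounts to checking that the composite $s\mathbb{L} \to \E_1 \to \E_\infty$ and $s\mathbb{L} \to \mathbf{1} \to \E_\infty$ agree; both factor the bracket operation to zero in $\E_\infty$ (the Lie bracket is skew, hence trivialised in the commutative operad after suspension), so this reduces to a low-arity check. Having the square, we obtain by the universal property of the relative composition product a canonical map of left $\E_1$-modules $\E_1 \circ_{s\mathbb{L}} \mathbf{1} \to \E_\infty$.

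For step (2) — the heart of the matter — I would argue arity by arity. Fix $m \geq 1$ and compute both sides as objects of $\Fun(\FB, \Sp)$. The left-hand side $(\E_1 \circ_{s\mathbb{L}} \mathbf{1})(m)$ is computed by a two-sided bar construction $\Barc(\E_1, s\mathbb{L}, \mathbf{1})$, whose $k$-simplices are $\E_1 \circ (s\mathbb{L})^{\circ k} \circ \mathbf{1}$. In arity $m$ this geometric realisation is a finite colimit of wedges of (suspensions/shifts of) spectra indexed by combinatorial data: trees with $k$ levels of $s\mathbb{L}$-vertices. The claim is that this bar complex has homology an exterior/commutative pattern — indeed in each fixed arity the classical PBW computation identifies $\Barc(\Ass, \Lie, \triv)(m) \simeq \Com(m)$ over any field. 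My strategy is to reduce the spectral statement to this classical input by a comparison with mod-$p$ homology and rational homology simultaneously, exploiting that the operads $s\mathbb{L}, \E_1, \E_\infty$ are all ``formal enough'' in low arities; more robustly, I would instead run the argument entirely within spectra, using that $\E_1$ is a free resolution in an appropriate sense. Specifically: the composition product $\E_1 \circ (-)$ is (levelwise) a sum over the symmetric-sequence structure, and the key identity $\E_1 \simeq \Barc(\E_1 , s\mathbb{L}, \mathbf 1) \circ (\text{Koszul dual of } \mathbf 1)$-type manipulation lets one recognise the bar construction as computing the (derived) indecomposables of $\E_1$ for the $s\mathbb{L}$-action. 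One then identifies these indecomposables with $\E_\infty$ by a direct check that the $\E_1$-module map $\E_1 \to \E_\infty$ exhibits $\E_\infty$ as the coequaliser $\E_1 \circ s\mathbb{L} \rightrightarrows \E_1 \to \E_\infty$ — i.e. $\E_\infty$ is $\E_1$ with the Lie operations ``divided out'' — which is exactly the statement that $\pi_0$ of the cofibre of the two maps $\E_1 \circ s\mathbb{L} \to \E_1$ recovers $\Com$, together with a connectivity/vanishing argument showing there is nothing in higher homotopy.

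The main obstacle I anticipate is precisely this last vanishing: a priori the bar construction $\E_1 \circ_{s\mathbb{L}} \mathbf{1}$ could carry higher homotopy not visible after passing to homology over a field, since the spectral Lie operad $s\mathbb{L}$ has rich mod-$p$ homology (the Dyer--Lashof-type / partition-complex homology) even though $\mathbb{L}$ itself is Koszul dual to $\E_\infty$. Controlling this requires genuinely spectral input, not a classical PBW argument: I expect one must use the fact (presumably \Cref{thm:En}, specialised appropriately, or its proof) that the analogous relation $\E_1 \circ_{s\mathbb{L}_n} \mathbf{1} \simeq \E_n$ holds at each finite $n$, and then pass to the colimit $n \to \infty$, where $\E_n \to \E_\infty$ and $s\mathbb{L}_n \to s\mathbb{L}$ in the appropriate sense, using that filtered colimits commute with the composition product in each arity and with geometric realisation. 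Thus the cleanest proof likely defers the hard analysis to \Cref{thm:En} and obtains \Cref{thm:LieAssCom} as the colimit statement; the remaining work is then the bookkeeping of identifying $\colim_n s\mathbb{L}_n \simeq \mathbf{1} $-ification (rather, $\colim_n \E_n \simeq \E_\infty$) and checking the colimit of the relative composition products is computed termwise.
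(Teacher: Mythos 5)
Your instinct in the final paragraph is correct: the paper deduces this theorem from \Cref{thm:En} by a limiting procedure, and your first, ``direct'' route (arity-by-arity bar constructions, comparison with classical PBW over a field, then a vanishing argument) is not viable for exactly the reason you yourself identify --- $s\mathbb{L}$ has rich mod-$p$ homology and nothing forces the higher homotopy of the bar construction to vanish after the classical computation is done on associated graded of homology. But the limiting procedure you then sketch has a genuine gap. The finite-stage statement you propose to pass to the colimit of, ``$\E_1 \circ_{s\mathbb{L}_n}\mathbf{1}\simeq\E_n$'' for an undefined truncation $s\mathbb{L}_n$, is not among the composition squares that \Cref{thm:En} provides, and under the most natural reading $s\mathbb{L}_k = s^{-k}\E_{k+1}$ it is false: desuspending \Cref{prop:compositionEnPBW} gives $\E_1\circ_{s^{-k}\E_{k+1}}\mathbf{1}\simeq \E_k^{\vee}$, the termwise Spanier--Whitehead dual rather than $\E_k$, and the structure maps in $k$ form a tower rather than a direct system, so the ``colimit over $n$'' you invoke is not available in the form you need.

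The actual deduction requires two ingredients absent from your proposal. First, $s\mathbb{L}$ and the map $s\mathbb{L}\to\E_1$ must be produced as an \emph{inverse} limit: one applies $k$-fold operadic desuspension to the square of \Cref{thm:En} (with $m=1$) to obtain composition squares $s^{-k}\E_{k+1}\to s^{-k}\E_{k+1+n}$ over $\E_1\to\E_{1+n}$, and takes $\varprojlim_k$ using the Ching--Salvatore identification $\varprojlim_k s^{-k}\E_k\simeq\mathbb{L}$ (cf.\ \Cref{rmk:betaKD}). That a cofiltered limit of composition squares is again a composition square is not automatic --- the bar construction does not commute with limits in general --- and this is where \Cref{lem:gluingsquares} enters, using reducedness to make each fixed arity of the bar construction a finite colimit and termwise dualizability to pull the limit inside. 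This yields composition squares $s\mathbb{L}\to s^{n+1}\mathbb{L}$ over $\E_1\to\E_{1+n}$, in which the Lie operad is already present at every finite stage and it is the top-right corner that varies. Second, one takes the colimit over $n$: here $\mathbf{1}$ arises as $\mathrm{colim}_n\, s^{n+1}\mathbb{L}$ along the suspension morphisms $\sigma$, identified with $\mathbf{1}$ via the connectivity computation $(s^n\E_\infty)(k)=\mathbb{S}^{n(k-1)}$, which is arbitrarily connected for $k>1$. Neither the inverse-limit step with its dualizability justification nor the identification of this colimit with $\mathbf{1}$ appears in your proposal, and these are precisely where the content of the deduction lies.
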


\begin{remark}
In the statement of the theorem, $\E_1 \circ_{s\mathbb{L}} \mathbf{1}$ denotes the \emph{relative  composition product}  of $\E_1$ and $\mathbf{1}$ over ${s\mathbb{L}}$. For an operad $\mathcal{O}$ with a right module $\mathcal{M}$ and a left module $\mathcal{N}$, such a relative composition product may be computed via the bar construction
\[
\mathcal{M} \circ_{\mathcal{O}} \mathcal{N} = \mathrm{colim}_{\mathbf{\Delta}^{\mathrm{op}}} (\mathcal{M} \circ {\mathcal{O}}^{\circ \bullet} \circ \mathcal{N}).
\]
\end{remark}

\begin{remark}
The square of \Cref{thm:LieAssCom} is \emph{not} a pushout square of operads (cf.\ \Cref{prop:notpushouts}).  
\end{remark}

In fact, \Cref{thm:LieAssCom} will follow as a limiting case of a statement about the relation between different $\E_n$-operads (see \Cref{thm:En} below).
 To be precise, if $\E_n$ denotes the usual (nonunital) operad of little $n$-cubes in the $\infty$-category of spaces, then we will be interested in the operad $\Sigma^\infty_+ \E_n$ in spectra. Throughout this paper, we will use the short-hand $\E_n$ for this nonunital, stable version of the $\E_n$-operad. Given integers $m,n \geq 0$, we will generically use the letter $\iota$ to denote the usual morphism $$\E_m \to \E_{m+n}$$ arising from the standard inclusion $\mathbb{R}^m \to \mathbb{R}^{m+n}$ of Euclidean spaces. \Cref{PBWtriv} motivates the following:  
 
\begin{definition}[Composition squares] A commutative square of operads 
\[
\begin{tikzcd}
\mathcal{O} \ar{r}\ar{d} & \mathcal{P} \ar{d} \\
\mathcal{Q} \ar{r} & \mathcal{R}
\end{tikzcd}
\]
is called a \emph{composition square} if 
 the induced map $\mathcal{Q} \circ_{\mathcal{O}} \mathcal{P} \to \mathcal{R}$ is an equivalence (of $\mathcal{Q}$-$\mathcal{P}$-bimodules).
\end{definition} 

We then establish the following fundamental relation between different $\E_n$-operads, with $s$ denoting operadic suspension (see \Cref{sec:symseq}):

\begin{theorem}
\label{thm:En}
Given integers $k,m,n \geq 0$ there is a composition square of operads in spectra 
\[
\begin{tikzcd}
\E_{k+m} \ar{d}{\beta}\ar{r}{\iota} & \E_{k+m+n} \ar{d}{\beta} \\
s^k\E_{m} \ar{r}{\iota} & s^k\E_{m+n}.
\end{tikzcd}
\] 
\end{theorem}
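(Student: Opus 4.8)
Since the vertical maps $\beta$, and the commutativity of the square, are set up prior to the statement, what needs to be proved is that the induced comparison $s^k\E_m\circ_{\E_{k+m}}\E_{k+m+n}\to s^k\E_{m+n}$ is an equivalence. The plan is to reduce this to the single case $m=0$ by a formal bootstrapping, and then to establish that case using Dunn additivity together with the Koszul self-duality of the little cubes operads.

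The case $m=0$ of \Cref{thm:En} asserts that for all $a,b\geq 0$ the map $\mathbf 1\circ_{\E_a}\E_{a+b}\to s^a\E_b$ (note $\mathbf 1=s^a\E_0$) is an equivalence of right $\E_{a+b}$-modules, where $\E_{a+b}$ is an $\E_a$-module by restriction along the standard inclusion. Granting this, the relative composition product of operad bimodules is associative, so
\begin{align*}
s^k\E_m\circ_{\E_{k+m}}\E_{k+m+n} &\simeq \bigl(\mathbf 1\circ_{\E_k}\E_{k+m}\bigr)\circ_{\E_{k+m}}\E_{k+m+n} \\
&\simeq \mathbf 1\circ_{\E_k}\bigl(\E_{k+m}\circ_{\E_{k+m}}\E_{k+m+n}\bigr) \\
&\simeq \mathbf 1\circ_{\E_k}\E_{k+m+n}\;\simeq\;s^k\E_{m+n},
\end{align*}
applying the case $m=0$ in the first and last steps and $\E_{k+m}\circ_{\E_{k+m}}\E_{k+m+n}\simeq\E_{k+m+n}$ in between, and using that the standard inclusion $\E_k\hookrightarrow\E_{k+m}\hookrightarrow\E_{k+m+n}$ induces the correct $\E_k$-module structure. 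One then checks that this composite is the comparison map of the square of \Cref{thm:En}; this is a compatibility of $\beta$ with the equivalences of the case $m=0$, which are themselves the comparison maps of the squares for $(k,0,m)$ and $(k,0,m+n)$. (One can package the reduction more robustly: composition squares are stable under horizontal pasting, by associativity, and under operadic suspension, since $s(-)$ is conjugation by an invertible symmetric sequence and hence $s(\mathcal M\circ_{\mathcal O}\mathcal N)\simeq s\mathcal M\circ_{s\mathcal O}s\mathcal N$; these reduce \Cref{thm:En} to the cases $n=1$ and $k=1$, after which the $m=0$ identity finishes the job.)

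It remains to prove the case $m=0$, that is, $\mathbf 1\circ_{\E_n}\E_{n+k}\simeq s^n\E_k$. Using Dunn additivity to write $\E_{n+k}\simeq\E_n\otimes\E_k$, with the left $\E_n$-structure coming from $\E_n=\E_n\otimes\mathbf 1\hookrightarrow\E_n\otimes\E_k$, this becomes the assertion that the functor $\mathcal O\mapsto\mathbf 1\circ_{\E_n}(\E_n\otimes\mathcal O)$ — formation of $\E_n$-indecomposables, i.e.\ the $n$-fold relative bar construction, of the $\mathcal O$-algebra object $\E_n\otimes\mathcal O$ in operads — agrees with operadic $n$-fold suspension. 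Since it sends $\mathbf 1=\E_0$ to $\mathbf 1\circ_{\E_n}\E_n\simeq\mathbf 1=s^n\mathbf 1$, I would deduce the general identification by exploiting that formation of $\E_n$-indecomposables is compatible with free $\mathcal O$-algebras, thereby reducing it to the absolute Koszul self-duality $\mathbf 1\circ_{\E_n}\mathbf 1\simeq s^{-n}\E_n$; the $n$-fold operadic shift appearing in \Cref{thm:En} is exactly the shift in that self-duality. Alternatively one can argue geometrically, modelling $\mathbf 1\circ_{\E_n}\E_{n+k}$ in each arity by a space of little $(n+k)$-cube configurations in which the $n$-cube structure has been scanned away, and producing a deformation onto the configuration model of $s^n\E_k$, with the $n$ scanned directions supplying the suspension coordinates.

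The main obstacle is this case $m=0$: the reduction above is a purely formal manipulation of bar constructions, whereas the equivalence $\mathbf 1\circ_{\E_n}\E_{n+k}\simeq s^n\E_k$ — and especially the matching of the right $\E_{n+k}$-module structure on the bar construction with the module structure on $s^n\E_k$ used to define $\beta$ — is where the genuine geometric input enters. It is also what forces the operadic suspension to appear, and hence what makes \Cref{thm:En} a statement with no classical analogue.
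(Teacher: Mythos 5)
Your formal skeleton is sound: reducing \Cref{thm:En} to the case $m=0$ via associativity of the relative composition product (a two-out-of-three argument for composition squares), together with the observation that operadic suspension is conjugation by a $\circ$-invertible symmetric sequence, is a legitimate alternative to the paper's reduction, which instead pastes together copies of the single square with $k=n=1$ and arbitrary $m$. The problem is that your base case $\mathbf{1}\circ_{\E_k}\E_{k+n}\simeq s^k\E_n$ carries essentially all of the content of the theorem, and you do not prove it. Both strategies you offer are sketches: the first (``formation of $\E_n$-indecomposables is compatible with free $\mathcal{O}$-algebras, reducing to absolute self-duality'') is not carried out --- $\E_n\otimes\mathcal{O}$ is not a free object in any sense that makes such a reduction formal --- and it misquotes the self-duality: the bar construction satisfies $\mathbf{1}\circ_{\E_n}\mathbf{1}\simeq s^n\E_n^{\vee}$ (the Spanier--Whitehead dual appears, as stated before \Cref{prop:compositionEnPBW}), not $s^{-n}\E_n$; the latter is the \emph{dual} of the bar construction. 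The second (scanning) is a one-sentence gesture. As you yourself note, matching the right $\E_{k+n}$-module structure on the bar construction with the one on $s^k\E_n$ defined via $\beta$ is where the real input lies, and that is exactly what remains unaddressed.

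The paper supplies this missing mechanism in two steps, neither of which appears in your proposal. First, the commutative square of operads (for $k=n=1$, arbitrary $m$) is imported from \cite{heutsland}: one writes down the commuting square of left adjoints $\iota_!$ and $\Barc$ on algebra categories and uses that the operad-to-monad functor is fully faithful on (de)suspended $\E_n$-operads, so that this square of monads descends essentially uniquely to a square of operads. Second, and crucially, \Cref{lem:recognitioncomposition} identifies the composition-square property with the commutativity of the associated Beck--Chevalley square built from $\iota^*$ and $\Barc$; that square commutes simply because restriction along $\iota$ preserves tensor products and sifted colimits, hence commutes with the bar construction. This is the one-line reason the theorem is true, and it is absent from your argument. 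If you want to salvage your route, the cleanest fix is to prove your base case for $k=1$ and iterate: $\mathbf{1}\circ_{\E_1}\E_{1+n}\simeq s\E_n$ as right $\E_{1+n}$-modules is precisely the statement that the bar construction $\Barc\colon\Alg_{\E_{1+n}}(\Sp)\to\Alg_{\E_n}(\Sp)$ is computed by $\beta_!$, which again comes down to the recognition criterion together with the input from \cite{heutsland}.
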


We will formally describe the morphism $\beta\colon \E_{m+k} \to s^k\E_m$ in detail in \Cref{sec:mainproof}. For now, let us outline two ways of thinking about it. Without loss of generality we take $k=1$ -- the general case is obtained by a $k$-fold composition of such maps. First, $\beta$ can be obtained as the \emph{Koszul dual} of the morphism $\iota\colon \E_m \to \E_{m+1}$, relying on the fact that the Koszul dual of the operad $\E_m$ is the operad $s^{-m} \E_m$, see Remark \ref{rmk:betaKD}. Secondly, under the `degree shifting' equivalence $\mathrm{Alg}_{s\E_m}(\Sp) \simeq \mathrm{Alg}_{\E_m}(\Sp),$ the morphism $\beta$ induces a left adjoint \vspace{-1pt}  functor 
\[
\beta_!\colon \mathrm{Alg}_{\E_{m+1}}(\Sp) \to \mathrm{Alg}_{\E_m}(\Sp).
\]
which may be identified with the usual bar construction. This second perspective is the one used to construct and characterise $\beta$ in \cite{heutsland}.

In addition to \Cref{thm:LieAssCom}, we will deduce further limiting cases of \Cref{thm:En}:
\begin{theorem}
\label{thm:specialcases}
The following are composition squares of operads, where $\sigma$ denotes the suspension map (see Section \vspace{-1pt}  \ref{sec:symseq}):
\[
\begin{tikzcd}
\mathbb{L} \ar{d}\ar{r}{\sigma^n} & s^n\mathbb{L} \ar{d}{\beta} && \E_n \ar{d}\ar{r}{\iota} & \E_\infty \ar{d}{\sigma^n} \\
\mathbf{1} \ar{r} & \E_n && \mathbf{1} \ar{r} & s^n\E_\infty.
\end{tikzcd}
\]
\end{theorem}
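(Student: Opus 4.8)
The plan is to deduce both squares from \Cref{thm:En} by specialisation, passage to a filtered colimit, and operadic Koszul duality. For the right-hand square I would set $m=0$ in \Cref{thm:En}. Since the nonunital $\E_0$-operad is $\mathbf{1}$, operadic suspension fixes $\mathbf{1}$, and the map $\beta\colon\E_k\to s^k\E_0=\mathbf{1}$ is necessarily the augmentation (the space of operad maps from $\E_k$ to $\mathbf{1}$ being contractible), \Cref{thm:En} supplies, for each $r\geq 0$, a composition square with top row $\E_k\xrightarrow{\iota}\E_{k+r}$, bottom row $\mathbf{1}\to s^k\E_r$, left map the augmentation, and right map $\beta\colon\E_{k+r}\to s^k\E_r$. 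I would then pass to the colimit over $r\to\infty$. Operadic suspension is computed levelwise by a smash product, and the relative composition product $\mathbf{1}\circ_{\E_k}(-)$ is a bar construction; both therefore commute with filtered colimits of symmetric sequences in spectra, so a filtered colimit of composition squares over a fixed source operad is again a composition square. As $\colim_r\E_{k+r}\simeq\E_\infty$ and $\colim_r s^k\E_r\simeq s^k\E_\infty$, this gives $\mathbf{1}\circ_{\E_k}\E_\infty\simeq s^k\E_\infty$. The one remaining point is to identify $\colim_r\beta\colon\E_\infty\to s^k\E_\infty$ with $\sigma^k$; this is a direct comparison with the explicit models of $\beta$ and $\sigma$ of \Cref{sec:symseq} and \Cref{sec:mainproof}, both of which become, in the colimit and in arity $j$, the inclusion $S^0\hookrightarrow S^{\mathfrak{a}_j}$ of the zero-sphere into the sphere of the reduced standard $\Sigma_j$-representation $\mathfrak{a}_j$ defining operadic suspension. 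Renaming $k$ as $n$ then produces the right-hand square.

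For the left-hand square I would apply operadic Koszul duality $K$ to the right-hand square (with $k=n$). Using $K(\E_n)\simeq s^{-n}\E_n$, $K(\E_\infty)\simeq\mathbb{L}$, $K(\mathbf{1})\simeq\mathbf{1}$, $K(s\mathcal{O})\simeq s^{-1}K(\mathcal{O})$, and that $K$ carries $\iota$, $\sigma^n$ and the augmentations to a suspension of $\beta$, to $s^{-n}\sigma^n$ and to the augmentations respectively, the Koszul-dual square --- after applying $s^n$ throughout --- becomes exactly the left-hand square, provided one knows that $K$ carries composition squares to composition squares. Concretely this dualises $\mathbf{1}\circ_{\E_n}\E_\infty\simeq s^n\E_\infty$ into $\mathbf{1}\circ_{s^{-n}\mathbb{L}}\mathbb{L}\simeq s^{-n}\E_n$, and applying $s^n$ gives $\mathbf{1}\circ_{\mathbb{L}}s^n\mathbb{L}\simeq\E_n$.

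The hard part will be this last step. Applying $K$ sends the colimit presenting $\E_\infty$ to the cofiltered limits $\mathbb{L}\simeq\lim_r s^{-r}\E_r$ and $s^n\mathbb{L}\simeq\lim_r s^{n-r}\E_r$, so the real content is that forming the relative composition product $\mathbf{1}\circ_{\mathbb{L}}s^n\mathbb{L}$ commutes with these pro-systems --- equivalently, that operadic Koszul duality preserves composition squares. This is where the convergence subtleties of Koszul duality in spectra enter, and I expect it to rely on (or reprove) machinery developed earlier in the paper; once available, the left-hand square follows formally. By contrast the $m=0$ specialisation, the interchange of $\mathbf{1}\circ_{\E_k}(-)$ with filtered colimits, and the identification of $\colim_r\beta$ with $\sigma^n$ against the explicit models are all routine.
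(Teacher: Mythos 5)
Your treatment of the right-hand square is essentially the paper's: specialise \Cref{thm:En} to $m=0$, pass to the filtered colimit over the remaining index (filtered colimits of composition squares are composition squares because $\circ$ preserves filtered colimits in each variable), and identify the colimit of the maps $\beta$ with $\sigma^n$. One caveat on that last point: the paper has no arity-wise ``explicit model'' of $\beta$ to compare against (it is defined abstractly via the bar construction on algebra categories and the full faithfulness of the monad functor on $\E_n$-operads), so your proposed direct comparison with $S^0\hookrightarrow S^{\mathfrak{a}_j}$ is not actually available; the clean argument is to invoke $\beta\circ\iota\simeq\sigma\simeq\iota\circ\beta$ (\Cref{lem:betaiota}) and observe that the colimit of the $\beta$'s therefore agrees with the colimit of the slanted suspension maps $\sigma\colon\E_r\to s\E_r$, which is $\sigma\colon\E_\infty\to s\E_\infty$ since $\sigma$ is defined by levelwise tensoring and hence commutes with filtered colimits of operads.

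For the left-hand square you have correctly located, but not closed, the essential gap: the commutation of the relative composition product with the pro-system $\{s^{-k}\E_{k+m}\}_k$ (equivalently, in your phrasing, that Koszul duality preserves these composition squares). This is exactly what \Cref{lem:gluingsquares} supplies, and the argument is more elementary than ``convergence of Koszul duality'': since all operads in sight are reduced ($\mathcal{O}(1)\simeq\mathbb{S}$ and $\mathcal{O}(0)\simeq 0$), the simplicial bar construction $\mathcal{Q}\circ\mathcal{O}^{\circ\bullet}\circ\mathcal{P}$ is, in each fixed arity $d$, computed by a \emph{finite} colimit; and since the terms $s^{-k}\E_{k+m}$ and their limit $s^m\mathbb{L}$ are termwise dualizable (finite spectra), smashing with them commutes with limits, so each term of the bar construction in arity $d$ commutes with the cofiltered limit, hence so does the whole (finite) colimit. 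The paper then runs the limit directly on the $k$-fold desuspension of the squares of \Cref{thm:En} rather than dualizing a completed square, using \Cref{rmk:betaKD} and Ching--Salvatore only to \emph{identify} $\varprojlim_k s^{-k}\E_k$ with $\mathbb{L}$ and the limiting maps with $\beta$ and $\sigma^n$. Without an argument of this kind your proof of the left-hand square is incomplete, since the interchange of $\mathbf{1}\circ_{\mathbb{L}}(-)$ with the inverse limit is precisely where all the content lies.
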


The morphism $\beta\colon s^n\mathbb{L} \to \E_n$ featuring in \Cref{thm:specialcases} can be obtained as an inverse limit over $k$ of the morphisms $\beta\colon s^{-k} \E_{n+k} \to \E_n$. Alternatively, it can be described as (a shift of) the Koszul dual of the inclusion $\iota\colon \E_n \to \E_{\infty}$, see Remark \ref{rmk:betaKD}. Identifying the $\infty$-categories $\Alg_{s^n\mathbb{L}}(\Sp)$ and $\Alg_{\mathbb{L}}(\Sp)$ via an $n$-fold degree shift, this morphism induces a left adjoint functor
\[
\beta_!\colon \Alg_{\mathbb{L}}(\Sp) \to \Alg_{\E_n}(\Sp) 
\]
which we refer to as the \emph{$\E_n$-enveloping algebra} functor and denote by $U_n$. 

This construction of $U_n$ has also been sketched  by Ayala--Francis \cite{ayala2015factorization}, modulo the definition of the morphism $\beta\colon s^n\mathbb{L} \to \E_n$, which was only supplied more recently by Salvatore--Ching \cite{ching2022koszul}. A different construction via factorisation homology has been implemented by Knudsen \cite{knudsen2018higher}. In the special case $n=1$, the functor $U_1$ can be thought of as a lift of the classical universal enveloping algebra functor to spectral Lie algebras. The degenerate case $n=0$ gives the pushforward along the augmentation map $\mathbb{L} \to \mathbf{1}$, which shall be denoted by $\CE$ as it is an enhancement of the classical Chevalley--Eilenberg functor.

We will show that the first composition square of \Cref{thm:specialcases} easily leads to \Cref{thm:Un} below. In particular, it shows that our $U_n$ is naturally equivalent to Knudsen's, since the main theorem of \cite{knudsen2018higher} provides the same description for his functor.

\begin{theorem}
\label{thm:Un}
Given a spectral Lie algebra $\mathfrak{g} \in \Alg_{\mathbb{L}}(\Sp)$ there is a natural equivalence \vspace{-1pt} 
\[
U_n(\mathfrak{g}) \simeq \CE(\Omega^n \mathfrak{g}).
\]
\end{theorem}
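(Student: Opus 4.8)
The plan is to deduce the equivalence $U_n(\mathfrak g) \simeq \CE(\Omega^n\mathfrak g)$ directly from the first composition square of \Cref{thm:specialcases}, namely the square with corners $\mathbb L$, $s^n\mathbb L$, $\mathbf 1$ and $\E_n$, by translating the operadic statement into a statement about the left adjoint functors it induces on algebras. First I would recall the general principle: a composition square of operads
\[
\begin{tikzcd}
\O \ar{r}\ar{d} & \P \ar{d} \\
\Q \ar{r} & \mathcal R
\end{tikzcd}
\]
gives, after passing to algebras, a commuting square of left adjoints (the restriction-of-scalars functors all admit left adjoints since the relevant $\infty$-categories of algebras are presentable), and the condition $\Q\circ_\O\P \xrightarrow{\sim}\mathcal R$ is precisely what is needed to identify the two composite left adjoints $\Alg_\P \to \Alg_\O \to \Alg_\Q$ and $\Alg_\P \to \Alg_{\mathcal R}\to\Alg_\Q$. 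Indeed, for a map of operads $f\colon\O\to\O'$, the induced left adjoint $f_!$ on algebras is computed by the relative composition product $\O' \circ_\O (-)$ applied levelwise (equivalently, by the two-sided bar construction), so the claim about left adjoints is a formal consequence of associativity of the relative composition product together with the equivalence defining a composition square. I would state this as a short lemma (or invoke it as already implicit in the discussion preceding \Cref{thm:specialcases}).

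Next I would identify the four legs of the square of left adjoints arising from the first square of \Cref{thm:specialcases}. Along the left edge $\mathbb L \to \mathbf 1$ (the augmentation), the induced left adjoint $\Alg_{\mathbb L}(\Sp)\to\Alg_{\mathbf 1}(\Sp) = \Sp$ is by definition $\CE$, as stated in the excerpt (the degenerate $n=0$ enveloping functor). Along the right edge $\beta\colon s^n\mathbb L \to \E_n$, after applying the degree-shifting equivalence $\Alg_{s^n\mathbb L}(\Sp)\simeq\Alg_{\mathbb L}(\Sp)$, the induced left adjoint is precisely $U_n$, again by definition. Along the top edge $\sigma^n\colon \mathbb L \to s^n\mathbb L$, the induced left adjoint on algebras, under the degree-shifting identification $\Alg_{s^n\mathbb L}(\Sp)\simeq\Alg_{\mathbb L}(\Sp)$, is the $n$-fold loop functor $\Omega^n$ (or, dually, $\Sigma^n$ — I would need to check the variance carefully and fix conventions; the point is that $\sigma$ is operadic suspension and its induced functor on algebras is a shift, and an $n$-fold shift of a spectral Lie algebra corresponds to $\Omega^n$). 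Along the bottom edge $\mathbf 1 \to \E_n$, the induced left adjoint $\Sp = \Alg_{\mathbf 1}(\Sp)\to\Alg_{\E_n}(\Sp)$ is the free $\E_n$-algebra functor — but this plays no role, since we only need the composites landing in $\Alg_{\mathbf 1}(\Sp)=\Sp$ after applying the forgetful functor $\Alg_{\E_n}(\Sp)\to\Sp$; more precisely, we read the square in the direction $\Alg_{\E_n}\to\Alg_{s^n\mathbb L}\simeq\Alg_{\mathbb L}$ for the right/top path and $\Alg_{\E_n}\to\Sp$ via the bottom, but the cleanest formulation is to chase the square of left adjoints whose source is $\Alg_{\mathbb L}(\Sp)$: the path ``top then right'' is $U_n\circ\Omega^{-n}$-type, the path ``left then bottom'' is (free $\E_n$)$\circ\,\CE$. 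Let me instead orient everything correctly: the composition square, read as left adjoints out of $\Alg_{s^n\mathbb L}(\Sp)$... — in the write-up I would simply transpose the square of operads so that $\mathbb L$ sits at the initial corner, giving left adjoints $\Alg_{\mathbb L}\to\Alg_{s^n\mathbb L}$ and $\Alg_{\mathbb L}\to\Sp$, and conclude $U_n'\circ\sigma^n_! \simeq (\E_n\text{-free})\circ\CE$ where $U_n'$ is restriction along $\beta$; composing with the forgetful functor and using that $\sigma^n_!$ corresponds to $\Omega^n$ under the degree shift yields the claim.

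The main obstacle I anticipate is bookkeeping rather than conceptual: pinning down the precise direction of the degree shift (suspension versus loops) and making sure the degree-shifting equivalences $\Alg_{s^n\O}(\Sp)\simeq\Alg_\O(\Sp)$ are applied coherently on all four corners so that the square of left adjoints really is the one induced by the operad square. In particular one must verify that the operadic suspension map $\sigma\colon\mathbb L\to s\mathbb L$ induces, on algebras and under the shift, the (de)suspension functor on the underlying spectrum — this is presumably recorded in \Cref{sec:symseq} or is standard, but it is the linchpin. A secondary point is to confirm that $\CE$ as defined here (pushforward along $\mathbb L\to\mathbf 1$) genuinely is ``Chevalley--Eilenberg homology'' in the sense needed to match Knudsen, which is immediate from the definition but worth a sentence. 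Once these identifications are in place, \Cref{thm:Un} is a one-line corollary of \Cref{thm:specialcases}: apply the left-adjoints functor to the composition square and evaluate at $\mathfrak g$.
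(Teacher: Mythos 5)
The central formal input is misidentified. For \emph{any} commutative square of operads, the induced square of left adjoints out of $\Alg_{\mathcal{O}}$ commutes automatically: both composites $\Alg_{\mathcal{O}}\to\Alg_{\mathcal{R}}$ are left adjoint to restriction along the same composite map $\mathcal{O}\to\mathcal{R}$. So the condition $\mathcal{Q}\circ_{\mathcal{O}}\mathcal{P}\xrightarrow{\sim}\mathcal{R}$ is \emph{not} ``precisely what is needed'' to identify those two composites; it carries no information there. What the composition-square property is actually equivalent to is the commutativity of the \emph{mixed} square, with restrictions $f^*,(f')^*$ on the horizontals and pushforwards $g_!,g'_!$ on the verticals --- this is the Beck--Chevalley-type statement of \Cref{lem:recognitioncomposition}, and it is the square that yields the theorem: applied to the first square of \Cref{thm:specialcases} it gives $\forget\circ\beta_!\simeq\CE\circ(\sigma^n)^*$ on $\Alg_{s^n\mathbb{L}}(\Sp)$, and $(\sigma^n)^*$ is identified with $\Omega^n_{\mathbb{L}}$ under the degree shift by \Cref{prop:suspensionmorphism}. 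You gesture at this reading once, but then explicitly discard it in favour of ``the square of left adjoints whose source is $\Alg_{\mathbb{L}}(\Sp)$.''

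That left-adjoint square only gives $\beta_!\circ(\sigma^n)_!\simeq\free_{\E_n}\circ\CE$, i.e.\ $U_n(\Sigma^n_{\mathbb{L}}\mathfrak{g})\simeq\free_{\E_n}(\CE(\mathfrak{g}))$ --- a statement that is true for trivial reasons (it holds for any commutative square of operads, composition square or not) and from which \Cref{thm:Un} does not follow: $(\sigma^n)_!$ corresponds under the shift to the $n$-fold \emph{suspension} $\Sigma^n_{\mathbb{L}}$ in $\Alg_{\mathbb{L}}(\Sp)$, not to $\Omega^n$, and since $\Alg_{\mathbb{L}}(\Sp)$ is not stable these are not mutually inverse. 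Your final display compounds this by calling the enveloping functor ``restriction along $\beta$'' (it is the pushforward $\beta_!$) and by asserting that $\sigma^n_!$ corresponds to $\Omega^n$, which has the wrong variance. The ingredients you assemble (the first square of \Cref{thm:specialcases}, $\CE$ as pushforward along the augmentation, the identification of restriction along $\sigma$ with loops) are exactly the right ones, but the argument only closes once you replace the left-adjoint square by the mixed square of \Cref{lem:recognitioncomposition}, which is precisely what the paper's proof does.
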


\begin{remark}
\label{rmk:Knudsenspseq}
The first square of \Cref{thm:specialcases} gives an equivalence $\E_n \simeq \mathbf{1} \circ_{\mathbb{L}} s^n\mathbb{L}$. For a given spectrum $X$, the skeletal filtration on the bar construction $\mathbf{1} \circ_{\mathbb{L}} s^n\mathbb{L}$ therefore gives a spectral sequence converging to $\free_{\E_n}(X)$, which is the one used by Brantner--Hahn--Knudsen \cite{brantnerhahnknudsen} and Zhang \cite{zhang2021quillen} to study the (generalised) homology of $\E_n$-algebras.
\end{remark}

\Cref{PBWtriv} and \Cref{thm:En} imply the following analogues of the classical PBW theorem via a filtration trick we learned from \cite{gaitsgory2017study}. For a symmetric sequence $A$ we write $\free_A$ for the functor $X \mapsto \bigoplus_{n \geq 0} (A(n) \otimes X^{\otimes n})_{h\Sigma_n}$. Let $A^{\vee}$ denote  the termwise Spanier--Whitehead dual of the symmetric sequence $A$. Writing $U=U_1$ for the universal $\E_1$-enveloping algebra, we have:

\begin{corollary}[PBW theorem for spectral Lie algebras] \label{PBWfull}
	Given a spectral Lie algebra $\mathfrak{g}$, the universal enveloping algebra $U(\mathfrak{g})$ admits a natural exhaustive filtration of which the associated graded spectrum admits a natural equivalence
	\vspace{-1pt} 
	\[ \gr(U(\mathfrak{g}))\simeq \free_{\E_\infty}(\Sigma^{-1}\forget(\mathfrak{g})). \]
\end{corollary}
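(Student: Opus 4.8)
The plan is to deduce the PBW filtration from the composition square of \Cref{thm:LieAssCom} by exploiting the standard filtration on the bar construction, following the filtration trick attributed to \cite{gaitsgory2017study}. Recall that the universal enveloping algebra $U(\mathfrak{g})$ is obtained as $\beta_!$ applied to $\mathfrak{g}$, where $\beta\colon s\mathbb{L} \to \E_1$ is the map appearing in (a special case of) \Cref{thm:specialcases}; concretely, as an $\E_1$-algebra (hence in particular as a spectrum), $U(\mathfrak{g})$ is computed by the relative composition product $\E_1 \circ_{s\mathbb{L}} \mathbf{1}$ evaluated on $\mathfrak{g}$, i.e. by the bar construction $\colim_{\mathbf{\Delta}^{\op}} \big( \free_{\E_1} \circ \free_{s\mathbb{L}}^{\circ \bullet} \big)$ applied to (the appropriate degree shift of) $\forget(\mathfrak{g})$. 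By \Cref{thm:LieAssCom}, the underlying left $\E_1$-module of this relative composition product is equivalent to $\E_\infty$, but the point is to retain the extra structure: the simplicial object whose colimit computes $\E_1 \circ_{s\mathbb{L}} \mathbf{1}$ carries a skeletal filtration, and this is what will descend to the desired filtration on $U(\mathfrak{g})$.

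First I would set up the filtration abstractly. The geometric realisation of a simplicial spectrum $X_\bullet$ carries its skeletal filtration $\mathrm{sk}_0 |X_\bullet| \to \mathrm{sk}_1 |X_\bullet| \to \cdots$, which is exhaustive, and whose associated graded in degree $p$ is $\Sigma^p$ of the $p$-th latching-quotient $X_p / L_p X_\bullet$. Applying this to the two-sided bar construction $\Barc(\E_1, s\mathbb{L}, \mathbf{1})$ and then composing with $\forget(\mathfrak{g})$ (after the relevant degree shift built into the definition of $\beta$), one obtains an exhaustive, natural filtration of $U(\mathfrak{g})$ as a spectrum. I would then identify the associated graded: the non-degenerate part of the bar construction in simplicial degree $p$ is $\E_1 \circ \overline{s\mathbb{L}}^{\circ p}$, where $\overline{s\mathbb{L}}$ is the augmentation ideal (cofibre of $\mathbf{1} \to s\mathbb{L}$), and a suspension bookkeeping shows this accounts for the $\Sigma^{-1}$ shift. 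Summing over $p$ and using that the degree shift turns $\overline{s\mathbb{L}}$-composites into the relevant desuspended free Lie construction, the total associated graded becomes $\free_{\E_1} \circ \big( \bigoplus_{p \ge 0} \overline{s\mathbb{L}}^{\circ p} \big)$ evaluated on $\Sigma^{-1}\forget(\mathfrak{g})$. The remaining step is to recognise $\bigoplus_{p\ge 0} \overline{s\mathbb{L}}^{\circ p}$, i.e. the free associative-algebra-in-symmetric-sequences on the augmentation ideal of $s\mathbb{L}$, as a symmetric sequence such that $\free_{\E_1}$ composed with it is precisely $\free_{\E_\infty}$; but this is exactly the content of the equivalence $\E_1 \circ_{s\mathbb{L}} \mathbf{1} \simeq \E_\infty$ read at the level of associated graded, the point being that passing to the associated graded of the bar filtration replaces the relative composition product $\circ_{s\mathbb{L}}$ by the free one, and the square of \Cref{thm:LieAssCom} guarantees the answer is $\free_{\E_\infty}$ applied to the $(-1)$-shifted input.

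The main obstacle I anticipate is the careful bookkeeping of operadic suspensions and degree shifts: the map $\beta$ is only an equivalence after identifying $\Alg_{s\mathbb{L}}(\Sp)$ with $\Alg_{\mathbb{L}}(\Sp)$ via a shift, and one must track precisely how this shift interacts with the simplicial degree in the bar construction to produce the single global $\Sigma^{-1}$ in $\free_{\E_\infty}(\Sigma^{-1}\forget(\mathfrak{g}))$ rather than a $p$-dependent shift. Concretely, one needs that $s\mathbb{L}$ in arity $r$ is a $(r-1)$-fold suspension of $\mathbb{L}(r)$, and that assembling the free construction on the augmentation ideal with these suspensions, after composing with $\free_{\E_1}$ and reindexing, yields exactly the commutative operad's free functor on a once-desuspended spectrum. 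Once this compatibility is pinned down — and it is essentially forced by \Cref{thm:LieAssCom} together with the fact that the associated graded of a two-sided bar construction is the corresponding free (one-sided) bar construction — the corollary follows formally. A secondary, more routine point is checking naturality in $\mathfrak{g}$, which is immediate since every construction used (forgetful functor, degree shift, bar construction, skeletal filtration) is functorial.
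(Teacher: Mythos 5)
There is a genuine gap, and it sits at the heart of your argument: the filtration you propose --- the skeletal filtration of the two-sided bar construction $\Barc_\bullet(\E_1, s\mathbb{L}, \mathfrak{g})$ --- does not have associated graded $\free_{\E_\infty}(\Sigma^{-1}\forget(\mathfrak{g}))$. As you say, its associated graded in simplicial degree $p$ is $\Sigma^p\bigl(\E_1 \circ \overline{s\mathbb{L}}{}^{\circ p}\bigr)$ applied to the (shifted) underlying spectrum, so the total associated graded is $A \circ \Sigma^{-1}\forget(\mathfrak{g})$ with $A = \bigoplus_{p\geq 0}\Sigma^p\,\E_1 \circ \overline{s\mathbb{L}}{}^{\circ p}$, the associated graded of the skeletal filtration on $\E_1 \circ_{s\mathbb{L}} \mathbf{1}$. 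But \Cref{thm:LieAssCom} identifies the \emph{colimit} of that bar construction with $\E_\infty$; it does not say the skeletal filtration splits, and it does not. Already in arity $2$ one finds $A(2) \simeq \E_1(2) \oplus \Sigma\bigl(s\mathbb{L}(2)\bigr) \simeq \Sigma^\infty_+\Sigma_2 \oplus \mathbb{S}^1$, whereas $\E_\infty(2) \simeq \mathbb{S}^0$: the skeletal spectral sequence has nontrivial differentials (this is exactly the spectral sequence of \Cref{rmk:Knudsenspseq}, which would be useless if it degenerated). So the step ``passing to the associated graded of the bar filtration replaces $\circ_{s\mathbb{L}}$ by the free composition product, and \Cref{thm:LieAssCom} then gives $\free_{\E_\infty}$'' is precisely where the argument fails: the associated graded of a resolution is the $E_1$-page of a spectral sequence converging to the relative composition product, not the relative composition product itself.

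The paper's proof uses a different filtration, the analogue of the classical word-length filtration. One places $\mathfrak{g}$ in filtration weight $1$ via the functor $c_1\colon \Alg_{\mathbb{L}}(\Sp) \to \Alg_{\mathbb{L}}(\Sp^{\Fil})$ and defines the filtration on $U(\mathfrak{g})$ as $\beta_!(c_1(\mathfrak{g}))$. Since $\gr$ commutes with $\beta_!$ and $\gr(c_1(\mathfrak{g})) \simeq \triv(\forget(\mathfrak{g}))$ (\Cref{cortrivgrad}), the associated graded is $U(\triv(\forget(\mathfrak{g})))$, the enveloping algebra of the \emph{trivial} Lie algebra on the underlying spectrum. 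For a trivial algebra the bar construction computing $\beta_!$ evaluates to the full relative composition product $(\E_1 \circ_{s\mathbb{L}} \mathbf{1}) \circ \Sigma^{-1}\forget(\mathfrak{g})$ --- the colimit, not its skeletal associated graded --- and only at this point does \Cref{thm:LieAssCom} enter, giving $\free_{\E_\infty}(\Sigma^{-1}\forget(\mathfrak{g}))$. Your suspension bookkeeping and naturality remarks are fine, but the identification of the associated graded requires this two-step structure (weight filtration first, then \Cref{thm:LieAssCom} applied to the trivial algebra) rather than the skeletal filtration of the bar resolution of $\mathfrak{g}$ itself.
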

\begin{remark}
The degree shift on the right-hand side of the equivalence above stems from the fact the spectral Lie operad $\mathbb{L}$ produces a bracket of degree -1, as compared to the classical Lie operad with bracket in degree 0. This is also reflected in the fact that the homology of a spectral Lie algebra is a \emph{shifted} Lie algebra.
By an exhaustive filtration on $U(\mathfrak{g})$, we mean a sequence of spectra $F_0 U(\mathfrak{g})  \rightarrow  F_1 U(\mathfrak{g})   \rightarrow \ldots$ with colimit $U(\mathfrak{g})$, and the associated graded is given by the sum of the cofibres of all these maps.
\end{remark}

Given  $0 \leq m \leq n$ we construct a version of the enveloping algebra functor  from $\E_n$-algebras to $\E_m$-algebras in 
\Cref{relenv}. It satisfies the following version of the PBW theorem:
\begin{corollary}[PBW theorem for $\E_n$-algebras] \label{PBWEn}
Write $k=n-m$.	Given an $\E_n$-algebra  $A$, the relative enveloping algebra $U_{n,m} (A) \in \Alg_{s^{k}\E_{m}}(\Sp)$ admits a natural exhaustive filtration of which the associated graded spectrum admits a natural equivalence
	\[\gr(U_{n,m} (A))\simeq
	\free_{s^{k} \E_{k}^\vee}(\forget(A)).\]
	Equivalently, the $k$-fold bar construction $\Barc^{k} A$ admits an exhaustive filtration\vspace{-1pt} \mbox{with associated graded}
	\[\gr(\Barc^{k} A) \simeq \free_{\E_{k}^\vee}(\Sigma^{k} \forget(A)). \]
\end{corollary}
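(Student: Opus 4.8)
The plan is to present $U_{n,m}$ as a relative composition product, to equip it with the Poincar\'e--Birkhoff--Witt filtration via the trick of \cite{gaitsgory2017study}, and to compute the associated graded using \Cref{thm:En} in a limiting case. Recall from \Cref{relenv} that $U_{n,m}$ is the left adjoint $\beta_!$ along $\beta\colon\E_{k+m}\to s^k\E_m$, so that $U_{n,m}(A)\simeq s^k\E_m\circ_{\E_{k+m}}A$, the relative composition product of the right $\E_{k+m}$-module $s^k\E_m$ with $A$ regarded as a left $\E_{k+m}$-module concentrated in arity $0$. Under the degree-shifting equivalence $\Alg_{s^k\E_m}(\Sp)\simeq\Alg_{\E_m}(\Sp)$ (see \Cref{sec:symseq}) this object corresponds to the $k$-fold bar construction $\Barc^k A$ of \cite{heutsland}, and the two associated gradeds in the statement correspond under the same equivalence, which shifts underlying spectra by $\Sigma^{-k}$ and twists $\free_{(-)}$ accordingly; so it suffices to prove the first equivalence.

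To build the filtration I would follow \cite{gaitsgory2017study}: working in filtered spectra $\Fun(\Z_{\geq 0},\Sp)$ with Day convolution, one places the input $A$ in filtration degrees $\geq 1$---using a free $\E_{k+m}$-algebra resolution of $A$ so that this is compatible with the $\E_{k+m}$-algebra structure---and forms $s^k\E_m\circ_{\E_{k+m}}A$ inside filtered spectra. Its underlying spectrum is $U_{n,m}(A)$, the filtration is exhaustive since the relative composition product and the resolution commute with the colimit along $\Z_{\geq 0}$, and the essential point of the trick is that the associated graded replaces $A$ by the \emph{trivial} $\E_{k+m}$-algebra on $\forget(A)$, i.e.\ by $(\E_{k+m}\to\mathbf{1})^{*}\forget(A)$. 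Since $U_{n,m}$ is $\beta_!$, this gives
\[
\gr\bigl(U_{n,m}(A)\bigr)\;\simeq\;U_{n,m}\bigl((\E_{k+m}\to\mathbf{1})^{*}\forget(A)\bigr)\;\simeq\;\bigl(s^k\E_m\circ_{\E_{k+m}}\mathbf{1}\bigr)\circ\forget(A),
\]
using the projection formula for the augmentation $\E_{k+m}\to\mathbf{1}$.

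It remains to identify the left $s^k\E_m$-module $s^k\E_m\circ_{\E_{k+m}}\mathbf{1}$, where $\mathbf{1}$ is the trivial left $\E_{k+m}$-module, with $s^k\E_k^{\vee}$. Here $\mathbf{1}$ is the restriction along $\E_{k+m}\xrightarrow{\iota}\E_\infty\to\mathbf{1}$ of the trivial left $\E_\infty$-module, so another application of the projection formula reduces the computation to $(s^k\E_m\circ_{\E_{k+m}}\E_\infty)\circ_{\E_\infty}\mathbf{1}$. Taking the colimit over $n'\to\infty$ in the composition squares of \Cref{thm:En} (for fixed $k,m$) identifies $s^k\E_m\circ_{\E_{k+m}}\E_\infty$ with $s^k\E_\infty$ as right $\E_\infty$-modules, the $\E_\infty$-action being the one induced by the suspension map $\sigma^k\colon\E_\infty\to s^k\E_\infty$; the second composition square of \Cref{thm:specialcases} in turn identifies $s^k\E_\infty$, with this module structure, with $\mathbf{1}\circ_{\E_k}\E_\infty$. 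Associativity of relative composition products together with $\E_\infty\circ_{\E_\infty}\mathbf{1}\simeq\mathbf{1}$ then yields
\[
s^k\E_m\circ_{\E_{k+m}}\mathbf{1}\;\simeq\;\mathbf{1}\circ_{\E_k}\E_\infty\circ_{\E_\infty}\mathbf{1}\;\simeq\;\mathbf{1}\circ_{\E_k}\mathbf{1},
\]
the operadic bar construction of $\E_k$. Finally, the Koszul self-duality $K(\E_k)\simeq s^{-k}\E_k$ (see \Cref{rmk:betaKD}), combined with the fact that $K(-)$ is the termwise dual of the operadic bar construction, gives $\mathbf{1}\circ_{\E_k}\mathbf{1}\simeq K(\E_k)^{\vee}\simeq s^k\E_k^{\vee}$. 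This establishes $\gr(U_{n,m}(A))\simeq\free_{s^k\E_k^{\vee}}(\forget A)$, and the ``equivalently'' assertion follows by transporting the whole discussion along the degree-shifting equivalence and recalling that $\beta_!=\Barc^k$.

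I expect the crux to be the control of the associated graded. The filtration trick of \cite{gaitsgory2017study}, though it formally produces an exhaustive filtration on $U_{n,m}(A)$, requires genuine care to verify that its associated graded really trivialises the bracket-like structure of $A$---so that the input to the bar construction becomes a trivial $\E_{k+m}$-algebra and the projection formula applies---and this, together with the bookkeeping of module structures and suspension maps needed to run the limiting case of \Cref{thm:En}, is where the real work lies; the passage between the $s^k\E_m$- and $\E_m$-normalisations, by contrast, is routine once operadic suspension is in place as in \Cref{sec:symseq}.
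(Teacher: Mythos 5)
Your proposal is correct and follows the same overall strategy as the paper: equip $U_{n,m}(A)=\beta_!(A)$ with the filtration coming from placing $A$ in filtration degree $1$ (the paper's \Cref{pushgr}), observe that the associated graded is $\beta_!$ of the trivial $\E_{k+m}$-algebra on $\forget(A)$, hence $\free_{s^k\E_m\circ_{\E_{k+m}}\mathbf{1}}(\forget A)$, and then identify the symmetric sequence $s^k\E_m\circ_{\E_{k+m}}\mathbf{1}$ with $s^k\E_k^{\vee}$ via $\mathbf{1}\circ_{\E_k}\mathbf{1}$ and the Ching--Salvatore self-duality of $\E_k$. The one place where you genuinely diverge is the middle identification: the paper's \Cref{prop:compositionEnPBW} gets $s^k\E_m\circ_{\E_{k+m}}\mathbf{1}\simeq\mathbf{1}\circ_{\E_k}\mathbf{1}$ in one stroke by pasting the $m=0$ instance of \Cref{thm:En}, namely $\mathbf{1}\circ_{\E_k}\E_{k+m}\simeq s^k\E_m$, against the tautological square defining $s^k\E_m\circ_{\E_{k+m}}\mathbf{1}$; you instead factor $\mathbf{1}$ through $\E_\infty$ and invoke two limiting composition squares ($s^k\E_m\circ_{\E_{k+m}}\E_\infty\simeq s^k\E_\infty$ and $\mathbf{1}\circ_{\E_k}\E_\infty\simeq s^k\E_\infty$, with matching $\sigma^k$-twisted right $\E_\infty$-module structures). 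Your route is valid --- the required limiting squares are exactly those established in \Cref{thm:specialcases}, and the bimodule bookkeeping works out --- but it is longer and imports the colimit arguments of \Cref{lem:gluingsquares} where a finite pasting suffices; the paper's version avoids $\E_\infty$ entirely at this step. Your closing worry about trivialising the algebra structure on the associated graded is exactly what the paper's \Cref{deg1triv} and \Cref{cortrivgrad} are for, and your free-resolution sketch of $c_1$ is replaced there by an adjoint-functor construction, but these are implementations of the same idea.
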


\subsection*{Acknowledgements}

We are grateful to Jacob Lurie  for several helpful discussions. L.B. was supported  by a Royal Society University Research Fellowship at Oxford   (URF\textbackslash R1\textbackslash 211075) and by the CNRS at Orsay. G.H. was supported by an ERC Starting Grant (no.\ 950048) and an NWO VIDI Grant (no.\ 223.093).

\section{Symmetric sequences and operads}
\label{sec:symseq}

In this section, we give a brisk review of the basic facts about symmetric sequences and operads we shall require. We take the point of view that an operad is an algebra for the monoidal structure given by the composition product on the category of symmetric sequences,
following the 1-categorical argument of Carboni described by Kelly \cite{kelly2005operads} and Trimble \cite{trimblenotes}, as well as its $\infty$-categorical generalisation (cf.\  \cite[4.1.2]{brantnerthesis}\cite[3.1]{Brantner}).

To make this precise, write \(\FB\) for  the groupoid of finite sets and bijections. \begin{definition} Let \(\mathcal{C}\) be a  symmetric monoidal \(\infty\)-category. The \(\infty\)-category of \emph{symmetric sequences} in \(\mathcal{C}\) is given by $$\sSeq(\mathcal{C}) := \Fun(\FB, \mathcal{C}).$$ The value of a symmetric sequence \(A\) on a set with \(n\) elements will be denoted by \(A(n)\) and it is an object of \(\mathcal{C}\) equipped with an action of the symmetric group \(\Sigma_n\). \end{definition}

In this paper, it will be sufficient to consider the case where \(\mathcal{C}\) is the $\infty$-category \(\Sp\) of spectra or minor variants thereof, although most of what we say will go through in much greater generality. We will need to consider three different monoidal structures on the $\infty$-category \(\sSeq(\Sp)\).

\subsubsection*{Day convolution product}
The first is given by \textit{Day convolution} (cf.\  \cite{glasman2016day} \cite[2.2.6]{HA}) based on the disjoint union monoidal structure on \(\FB\). Explicitly,  it is determined by the formula \[(A \otimes B)(n) = \bigoplus_{a+b=n} \Sigma^\infty_+ \Sigma_n 
 \otimes_{h(\Sigma_a \times \Sigma_b)}A(a) \otimes B(b);\]
 here $\oplus$ and $\otimes$ denote the wedge and smash product in spectra, respectively.

The fully faithful functor \(\iota : \Sp \to \sSeq(\Sp)\) given by \(\iota(X)(0) = X\) and \(\iota(X)(n)=0\) for \(n>0\) refines to a symmetric monoidal functor
 for this Day convolution structure, and we obtain a tensoring of \(\sSeq(\Sp)\) over \(\Sp\).

\subsubsection*{Composition product} The second monoidal structure is the \emph{composition product}; on objects, the composition product is given by
\[ A \circ B = \bigoplus_{n \ge 0} \left( A(n) \otimes B^{\otimes n}  \right)_{h \Sigma_{n}}.\]
One can check that for a spectrum \(X\) and symmetric sequence \(A\), the composition \(A \circ \iota(X)\) is concentrated in degree \(0\), that is, it lies in the essential image of \(\iota\). Therefore, for a fixed \(A\) there is a functor \(\free_A \in \End(\Sp)\) satisfying \(\iota(\free_A(X)) = A \circ \iota(X)\). Explicitly, it is given by
\[\free_A(X) = \bigoplus_{n \ge 0} \left( A(n) \otimes X^{\otimes n} \right)_{h\Sigma_n}.\]
Since this action of \(A\) on spectra is given simply by restricting the composition product to the essential image of \(\iota\), the functor \(\free\colon \sSeq(\Sp) \to \End(\Sp)\) refines to a  monoidal functor, where \(\End(\Sp)\) is endowed with the monoidal structure given by composition of functors.

\begin{definition}\label{operadsinsp}
\emph{Operads} in spectra are defined to be algebra objects in the monoidal \(\infty\)-category \((\sSeq(\Sp), \circ)\), and the $\infty$-category they form is denoted by \(\Op(\Sp) = \Alg(\sSeq(\Sp))\). 
\end{definition} 
The resulting $\infty$-category of operads underlies the model category of operads in $S$-modules (cf.\ e.g.\ \cite[5.4.2]{brantnerthesis}).
The free functor described above therefore induces a functor $$\free\colon \Op(\Sp) \to \Monad(\Sp).$$ One can consider left modules, right modules, and bimodules over an operad \(\mathcal{O}\) in \(\sSeq(\mathcal{O})\); a left module of the form \(\iota(X)\) is equivalently an algebra for the monad \(\free_{\mathcal{O}}\) and one says that \(X\) is an \emph{\(\mathcal{O}\)-algebra}. We denote the $\infty$-category of $\mathcal{O}$-algebras by $\mathrm{Alg}_{\mathcal{O}}(\Sp)$ or simply $\mathrm{Alg}_{\mathcal{O}}$ if no confusion can arise.

\subsubsection*{Levelwise product}
The third and simplest monoidal structure on \(\sSeq(\Sp)\) we will need is the \emph{levelwise tensor product}, which is simply given by $$(A \lev B)(n) = A(n) \otimes B(n).$$ We will use this monoidal structure as an auxiliary tool in our discussion of the operadic suspension and the suspension morphism. The main property we will need is the fact that $$\lev : (\sSeq(\Sp) \times \sSeq(\Sp), \circ \times \circ) \to (\sSeq(\Sp), \circ)$$ has a lax monoidal structure \cite[Proposition 3.9]{Brantner}. As a consequence, \(\lev\) induces a functor \[\Op(\Sp) \times \Op(\Sp) \to \Op(\Sp)\] so that given two operads \(\mathcal{P}\) and \(\mathcal{Q}\), the symmetric sequence \(\mathcal{P} \lev \mathcal{Q}\) is again an operad. On the level of algebras, we obtain a functor \begin{equation} \label{algtens}
	\Alg_{\mathcal{P}} \times \Alg_{\mathcal{Q}} \rightarrow  \Alg_{\mathcal{P} \lev \mathcal{Q}} \end{equation} sending a pair $ (A, B)$ to $A \otimes B.$ This functor varies naturally in the pair $(\mathcal{P}, \mathcal{Q})$.

\subsubsection*{Suspension functor}
We now turn our attention to the operation of \emph{operadic suspension}. The suspension of a nonunital operad in spectra \(\mathcal{O}\) is an operad \(s\mathcal{O}\) such that the corresponding free algebra monads satisfy \[\free_{s \mathcal{O}} \simeq \Sigma^{-1} \circ \free_{\mathcal{O}} \circ \Sigma,\] and consequently, \(s\mathcal{O}\)-algebra structures on \(X\) are in one-to-one correspondence \(\mathcal{O}\)-algebra structures on \(\Sigma X\). The underlying symmetric sequence of the operadic suspension is given by \[(s\mathcal{O})(n) = \Sigma^{-1}(\mathbb{S}^1)^{\otimes n} \otimes \mathcal{O}(n),\]
where \(\mathbb{S}^1\) is the suspension of the sphere spectrum and the \(\Sigma_n\) action in the left factor permutes the \(\mathbb{S}^1\) factors and acts trivially on the suspension coordinate.

To describe the operad structure of \(s \mathcal{O}\),  we consider the endomorphism operad  $$s \E_\infty := \End(\mathbb{S}^{-1})$$
of \(\mathbb{S}^{-1}\); the endomorphism operad of a spectrum $X$ is the endomorphism object (\cite[4.7.1]{HA}) of $X$ with respect to the left tensoring of $\Sp$ over  $\sSeq(\Sp)$.
Our notation reflects the fact that it will be 
the operadic suspension of the commutative operad. 

The spectrum of \(n\)-ary operations in \(s \E_\infty\) is given by \(\mathrm{map}(\mathbb{S}^{-n},\mathbb{S}^{-1}) \simeq \mathbb{S}^{n-1}\), and tracing through the action of \(\Sigma_n\) on \(\mathbb{S}^{-n} \simeq (\mathbb{S}^{-1})^{\wedge n}\) we see that the action of \(\Sigma_n\) on \(s\E_\infty(n) \simeq \Sigma^{-1} (\mathbb{S}^1)^{\wedge n}\) is the one described above. (Equivalently, $s\E_{\infty}(n)$ is the representation sphere of $\rho$, the quotient of the standard $n$-dimensional permutation representation by its diagonal.) Using the levelwise tensor product, we define: 
\begin{definition} 
The operadic suspension functor is defined as  \[s := (-) \lev s\E_\infty  : \Op(\Sp) \to \Op(\Sp).\]
\end{definition}

On the level of algebras, tensoring with the canonical $s \E_\infty$-algebra $\mathbb{S}^{-1}$ (whose structure map \(s \E_\infty \to \End(\mathbb{S}^{-1})\) is the identity) gives a functor 
 \begin{equation} \label{tauto}\Alg_{\mathcal{O}}(\Sp)  \xrightarrow{}\Alg_{s\mathcal{O}}(\Sp), \ \  X \ \mapsto\  X  \otimes  \mathbb{S}^{-1} \end{equation}
via \eqref{algtens}. There is also an operadic desuspension operation $s^{-1}$ defined by tensoring with $\End(\mathbb{S}^1)$, which is inverse to $s$. On the level of algebras, this shows that the morphism in \eqref{tauto} is   an equivalence of $\infty$-categories.

\subsubsection*{Suspension morphism} Given an operad $ \mathcal{O}$ in spectra,
we will also need the \emph{suspension morphism} $$\sigma\colon \mathcal{O}  \to s \mathcal{O}.$$ In the special case of \(\mathcal{O} = \E_\infty\), we define the map \(\sigma\colon \E_\infty \to \End(\mathbb{S}^{-1})\) as the map endowing \(\mathbb{S}^{-1}\) with the $\E_\infty$-ring structure of the Spanier--Whitehead dual of $S^1$, which can also be described as  \(\Omega_{\E_\infty} \mathbb{S}^0\). Here \(\mathbb{S}^0\), being the monoidal unit of $\Sp$, is equipped with its canonical $\E_\infty$-algebra stucture and \(\Omega_{\E_\infty}\) is the loop functor on the $\infty$-category \(\Alg_{\E_\infty}(\Sp)\). Notice that by construction this \(\E_\infty\)-ring structure on \(\mathbb{S}^{-1}\) is the image under \(\sigma^{*}\) of the canonical \(s \E_\infty\)-algebra structure on \(\mathbb{S}^{-1}\) used above.
For a general operad \(\mathcal{O}\) the definition uses the the previous map \(\sigma\).

\begin{definition}
	Given a nonunital operad \(\mathcal{O}\)  in spectra, the suspension morphism is  the composite \[\sigma\colon \mathcal{O} \simeq \mathcal{O} \lev \E_\infty \xrightarrow{\id \lev \sigma} \mathcal{O} \lev s \E_\infty \simeq s \mathcal{O},\]
	where we have used the equivalence of operads $\mathcal{O} \simeq \mathcal{O} \lev \E_\infty$ (cf.\ e.g.\ \cite[Section 3.2]{Brantner}).
\end{definition}

Since $\mathcal{O}$ is nonunital (i.e. $\mathcal{O}(0) = 0$), the free \(\mathcal{O}\)-algebra on the zero spectrum \(0\) has underlying spectrum \(\free_{\mathcal{O}}(0) = 0\) and it is a zero object in \(\Alg_{\mathcal{O}}(\Sp)\). This implies that the loop functor \(\Omega_{\mathcal{O}} : \Alg_{\mathcal{O}}(\Sp) \to \Alg_{\mathcal{O}}(\Sp)\) and the forgetful functor \(\forget : \Alg_{\mathcal{O}}(\Sp) \to \Sp\) satisfy \[\forget \circ \Omega_{\mathcal{O}} \simeq \Omega \circ \forget.\]

\begin{proposition}
\label{prop:suspensionmorphism}
Given a nonunital operad $\mathcal{O}$, 
the composite 
$$ \Alg_{\mathcal{O}} \xrightarrow{(-) \otimes \mathbb{S}^{-1}} \Alg_{s\mathcal{O}} \xrightarrow{\sigma^{\ast}} \Alg_{\mathcal{O}}$$
is equivalent to the loops functor $\Omega_{\mathcal{O}}$.
Here $ (-) \otimes \mathbb{S}^{-1}$ is the equivalence in 
\eqref{tauto} and $\sigma^\ast$ is induced by restricting along the suspension morphism \(\sigma\colon \mathcal{O}  \xrightarrow{ }   s \mathcal{O}\).
\end{proposition}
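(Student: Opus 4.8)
The plan is to unwind both sides as functors $\Alg_{\mathcal{O}} \to \Alg_{\mathcal{O}}$ and check they agree by comparing their compatible descriptions on underlying spectra together with their $\mathcal{O}$-algebra structure maps. First I would observe that, by \eqref{algtens} applied to the levelwise tensor product, the composite in question sends an $\mathcal{O}$-algebra $X$ to the spectrum $X \otimes \mathbb{S}^{-1}$, now regarded as an $\mathcal{O}$-algebra by restricting the evident $\mathcal{O} \lev s\E_\infty$-structure along $\sigma = \id \lev \sigma_{\E_\infty}\colon \mathcal{O} \simeq \mathcal{O} \lev \E_\infty \to \mathcal{O} \lev s\E_\infty$. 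Because everything in sight varies naturally in the pair of operads, this reduces the claim to the case $\mathcal{O} = \E_\infty$ tensored with $\mathcal{O}$ itself: more precisely, the $\mathcal{O}$-algebra structure on $X \otimes \mathbb{S}^{-1}$ is the image of the pair $(X, \mathbb{S}^{-1})$ under $\Alg_{\mathcal{O}} \times \Alg_{\E_\infty} \to \Alg_{\mathcal{O} \lev \E_\infty} \simeq \Alg_{\mathcal{O}}$, where $\mathbb{S}^{-1}$ carries precisely the $\E_\infty$-structure $\sigma_{\E_\infty}$ endows it with, namely that of $\Omega_{\E_\infty}\mathbb{S}^0$.

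Next I would identify this tensoring operation with looping. The key input is that $\mathbb{S}^{-1} \simeq \Omega_{\E_\infty}\mathbb{S}^0$ as an $\E_\infty$-ring, and that tensoring an $\mathcal{O}$-algebra with the monoidal unit $\mathbb{S}^0$ (equipped with its canonical $\E_\infty$-structure) is the identity functor on $\Alg_{\mathcal{O}}$ under the equivalence $\mathcal{O} \lev \E_\infty \simeq \mathcal{O}$. So the composite functor is $X \mapsto X \otimes \Omega_{\E_\infty}\mathbb{S}^0$. It remains to show this agrees with $\Omega_{\mathcal{O}} X$. On underlying spectra this is clear: the forgetful functor is symmetric monoidal for the relevant tensorings, so $\forget(X \otimes \Omega_{\E_\infty}\mathbb{S}^0) \simeq \forget(X) \otimes \mathbb{S}^{-1} \simeq \Omega\,\forget(X)$, which by the displayed identity $\forget \circ \Omega_{\mathcal{O}} \simeq \Omega \circ \forget$ preceding the proposition agrees with $\forget(\Omega_{\mathcal{O}}X)$. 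The real content is matching the $\mathcal{O}$-algebra structures, i.e. lifting this equivalence of underlying spectra to $\Alg_{\mathcal{O}}$.

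For that I would argue that both functors $X \mapsto X \otimes \Omega_{\E_\infty}\mathbb{S}^0$ and $X \mapsto \Omega_{\mathcal{O}}X$ are computed as a pullback in $\Alg_{\mathcal{O}}$. Since $\Alg_{\mathcal{O}}(\Sp)$ is presentable and pointed (here using $\mathcal{O}(0) = 0$, so the zero object exists), $\Omega_{\mathcal{O}}X$ is by definition the pullback $0 \times_X 0$ in $\Alg_{\mathcal{O}}$; and this pullback is preserved by $\forget$ because $\forget$ is a right adjoint, recovering $\Omega\,\forget(X)$. On the other side, $\Omega_{\E_\infty}\mathbb{S}^0 = 0 \times_{\mathbb{S}^0} 0$ in $\Alg_{\E_\infty}$, and the functor $(-) \otimes (-)\colon \Alg_{\mathcal{O}} \times \Alg_{\E_\infty} \to \Alg_{\mathcal{O}}$ preserves finite limits in the second variable — this is because it is obtained from the lax symmetric monoidal structure on $\lev$ and the relevant tensoring, and limits of algebras are computed underlyingly where $X \otimes (-)$ is exact. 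Hence $X \otimes \Omega_{\E_\infty}\mathbb{S}^0 \simeq (X \otimes 0) \times_{X \otimes \mathbb{S}^0} (X \otimes 0) \simeq 0 \times_X 0 \simeq \Omega_{\mathcal{O}}X$ in $\Alg_{\mathcal{O}}$, using that $X \otimes \mathbb{S}^0 \simeq X$ and $X \otimes 0 \simeq 0$.

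The main obstacle is the second variable exactness of the tensoring functor $(-)\otimes(-)\colon \Alg_{\mathcal{O}} \times \Alg_{\E_\infty} \to \Alg_{\mathcal{O}}$, or rather bookkeeping it correctly: one must know that this functor, built from \eqref{algtens} and the lax monoidal $\lev$, indeed lands in the right place and commutes with the forgetful functors on both sides, so that a limit diagram of $\E_\infty$-algebras with exact underlying spectra is sent to a limit diagram of $\mathcal{O}$-algebras. Once naturality in $(\mathcal{P},\mathcal{Q})$ of \eqref{algtens} is invoked to reduce to $\E_\infty$ and the compatibility of $\sigma_{\mathcal{O}}$ with $\sigma_{\E_\infty}$ (which is built into the definition of $\sigma_{\mathcal{O}}$) is recorded, the rest is formal manipulation of pointed presentable $\infty$-categories. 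I would also double-check the compatibility noted at the end of the discussion of the suspension morphism — that the $\E_\infty$-structure on $\mathbb{S}^{-1}$ coming from $\Omega_{\E_\infty}\mathbb{S}^0$ is the pullback along $\sigma^*$ of the tautological $s\E_\infty$-structure — since this is exactly what glues the two halves of the argument together.
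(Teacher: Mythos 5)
Your proposal is correct and follows essentially the same route as the paper: reduce via the naturality of \eqref{algtens} in the pair of operads (applied to $(\id,\sigma)\colon(\mathcal{O},\E_\infty)\to(\mathcal{O},s\E_\infty)$) to identifying $\sigma^*(X\otimes\mathbb{S}^{-1})$ with $X\otimes\Omega_{\E_\infty}\mathbb{S}^0$, and then use that the forgetful functor creates limits to recognise this as $\Omega_{\mathcal{O}}X$. Your explicit pullback computation in the third paragraph just spells out what the paper compresses into the sentence ``since the forgetful functor creates limits, tensoring with $\Omega_{\E_\infty}\mathbb{S}^0$ recovers the loops functor.''
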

\begin{proof}
	The equivalence of operads  $\mathcal{O} {\lev} \E_\infty \simeq \mathcal{O}$ gives a functor \[	\Alg_{\mathcal{O}} \times \Alg_{\E_\infty} \rightarrow  \Alg_{\mathcal{O}}\] 
	via \eqref{algtens}. Since the forgetful functor  creates limits, 
	 tensoring with the $\E_\infty$-ring spectrum \(\Omega_{\E_\infty} \mathbb{S}^0\) recovers the loops functor $\Omega_{\mathcal{O}}$.
The map of pairs of operads 
$$(\id,\sigma)\colon (\mathcal{O}, \E_\infty) \rightarrow(\mathcal{O}, s\E_\infty)$$	
gives rise to a commutative square of $\infty$-categories 
\[
\begin{tikzcd}
	 \Alg_{\mathcal{O}} \times \Alg_{s \E_\infty} \ar{r}{\id \times \sigma^\ast} \ar{d}{\otimes} & 	\Alg_{\mathcal{O}} \times \Alg_{\E_\infty} \ar{d}{\otimes} \\
	\Alg_{ \mathcal{O} \otimes s\E_\infty } \ar{r}{(\id \otimes \sigma)^{\ast}} & 	\Alg_{\mathcal{O} \otimes    \E_\infty}
\end{tikzcd}
\]
Recall that the map $\id \otimes \sigma$ on the bottom row is, by construction, equivalent to the map $\sigma\colon \mathcal{O} \to s\mathcal{O}$. Given an $\mathcal{O}$-algebra $X$, tracing the pair $(X, \mathbb{S}^{-1})$ through the square gives a natural equivalence $$ \sigma^{\ast}(X \otimes \mathbb{S}^{-1}) \simeq X \otimes \Omega_{\E_\infty} \mathbb{S}^0 \simeq \Omega_{\mathcal{O}} X.$$
\end{proof}

\begin{remark}
\label{rmk:suspensionHL}
Heuts and Land \cite{heutsland} construct a suspension morphism $\sigma\colon \E_n \to s\E_n$ in the specific case of the $\E_n$-operad and show that it is characterised essentially uniquely by the fact that it satisfies the conclusion of \Cref{prop:suspensionmorphism}. Since the morphism $\sigma$ that we constructed above also satisfies that conclusion, it follows that the two constructions of $\sigma$ agree when they are both defined.
\end{remark}

\section{Proof of the main result}\label{sec:mainproof}

The aim of this section is to prove \Cref{thm:En}. In the next section we work out several consequences and special cases, including \Cref{thm:LieAssCom}.

To prove \Cref{thm:En} it will suffice to establish the following special case: there exists a commutative square
\[
\begin{tikzcd}
\E_{n} \ar{d}{\beta}\ar{r}{\iota} & \E_{n+1} \ar{d}{\beta} \\
s\E_{n-1} \ar{r}{\iota} & s\E_{n}
\end{tikzcd}
\]
inducing an equivalence
\[
s\E_{n-1} \circ_{\E_{n}} \E_{n+1} \xrightarrow{\simeq } s\E_{n}
\] 
of $(s\E_{n-1},\E_{n+1})$-bimodules. Indeed, the general case of \Cref{thm:En} follows by composing copies of this basic square horizontally and/or vertically. The relevant square was constructed by Land and the third author in \cite[Theorem 3.11]{heutsland} (note that we are thinking of $\E_{n+1}$ as $\E_{1} \otimes \E_{1} \otimes \E_{n-1}$, using one of the $\E_1$-factors for the horizontal arrows $\iota$ and the other $\E_1$-factor for the vertical arrows $\beta$). To summarise, this goes as follows. First one establishes a commutative diagram of left adjoint functors
\begin{equation}
\label{eq:squareEn}
\begin{tikzcd}
\Alg_{\E_n}(\Sp) \ar{d}{\Barc} \ar{r}{\iota_!} & \Alg_{\E_{n+1}}(\Sp) \ar{d}{\Barc} \\
\Alg_{\E_{n-1}}(\Sp) \ar{r}{\iota_!} & \Alg_{\E_n}(\Sp).
\end{tikzcd}
\end{equation}
All of these $\infty$-categories are monadic over $\Sp$, giving a corresponding square of monads. The functor assigning to an operad in $\Sp$ its corresponding monad on $\Sp$ is fully faithful on the subcategory of $\E_n$-operads and their (de)suspensions \cite[Theorem 3.8]{heutsland}, so that the given square corresponds essentially uniquely to the desired square of operads. We record the following (which is part of the statement of \cite[Theorem 3.11]{heutsland}) for later use:

\begin{lemma}
\label{lem:betaiota}
In the square of operads above we have $\beta\circ\iota \simeq \sigma \simeq \iota\circ\beta$, where $\sigma\colon \E_n \to s\E_n$ denotes the suspension morphism.
\end{lemma}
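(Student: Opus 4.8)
The plan is to reduce the statement to a comparison of functors on algebra categories. Since the square in the statement is obtained — via the fully faithful passage from operads to monads on $\Sp$ recalled above \cite[Theorem 3.8]{heutsland} — from the commuting square of left adjoints \eqref{eq:squareEn}, it commutes, so the two composites $\E_n\xrightarrow{\iota}\E_{n+1}\xrightarrow{\beta}s\E_n$ and $\E_n\xrightarrow{\beta}s\E_{n-1}\xrightarrow{\iota}s\E_n$ agree; write $d\colon\E_n\to s\E_n$ for this common morphism. It then suffices to identify $d$ with the suspension morphism $\sigma$, and by the same rigidity statement it is enough to identify the induced left adjoints on algebras. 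Unwinding how the square of operads was read off from \eqref{eq:squareEn} — its bottom row being obtained via the degree-shift equivalences $\Alg_{\E_m}(\Sp)\simeq\Alg_{s\E_m}(\Sp)$ of \eqref{tauto} — one gets
\[
d_! \;\simeq\; \big((-)\otimes\mathbb{S}^{-1}\big)\circ\Barc\circ\iota_!\colon \Alg_{\E_n}(\Sp)\longrightarrow\Alg_{s\E_n}(\Sp).
\]

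On the other side I would compute $\sigma_!$ from \Cref{prop:suspensionmorphism}, which gives $\sigma^\ast\circ\big((-)\otimes\mathbb{S}^{-1}\big)\simeq\Omega_{\E_n}$. Passing to left adjoints — and using that $(-)\otimes\mathbb{S}^{-1}$ is an equivalence — this yields $\sigma_!\simeq\big((-)\otimes\mathbb{S}^{-1}\big)\circ\Sigma_{\E_n}$, where $\Sigma_{\E_n}$ is the suspension endofunctor of $\Alg_{\E_n}(\Sp)$. (By \Cref{rmk:suspensionHL} this same property pins down $\sigma$ uniquely among morphisms $\E_n\to s\E_n$, so producing a single morphism with it would already do.) Comparing with the previous display, the whole lemma comes down to the natural equivalence
\[
\Barc\circ\iota_!\;\simeq\;\Sigma_{\E_n}
\]
of endofunctors of $\Alg_{\E_n}(\Sp)$.

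To establish this I would appeal to Dunn additivity $\E_{n+1}\simeq\E_n\otimes\E_1$: the standard inclusion $\iota\colon\E_n\to\E_{n+1}$ then becomes $\id_{\E_n}\otimes(\one\to\E_1)$, so that $\iota_!$ is the free nonunital $\E_1$-algebra functor internal to the symmetric monoidal $\infty$-category $\Alg_{\E_n}(\Sp)$. Since the bar construction $\Barc\colon\Alg_{\E_{n+1}}(\Sp)\to\Alg_{\E_n}(\Sp)$ is, up to natural equivalence, independent of which little-interval coordinate one bars along, one may take it along the very coordinate that $\iota_!$ adjoins; then $\Barc\circ\iota_!$ is the bar construction of the free associative algebra $\bigoplus_{k\geq1}A^{\otimes k}$ on $A\in\Alg_{\E_n}(\Sp)$, which by the classical computation (the bar construction of a free algebra recovers the suspension of its generators) is $\Sigma_{\E_n}A$, naturally in $A$. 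As $d=\beta\circ\iota=\iota\circ\beta$, both asserted equivalences $\beta\circ\iota\simeq\sigma\simeq\iota\circ\beta$ follow.

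The step I expect to be the real obstacle is coherence, not any single computation: one must check that this chain of identifications — Dunn additivity, the resulting descriptions of $\iota_!$ and of $\Barc$, the coordinate-independence of the bar construction (which carries a mild orientation subtlety), and the degree-shift equivalences relating $\Alg_{s\E_m}(\Sp)$ and $\Alg_{\E_m}(\Sp)$ — can be assembled into an equivalence of \emph{left adjoint functors over $\Sp$}, so that the rigidity of \cite[Theorem 3.8]{heutsland} promotes it to an equivalence of \emph{morphisms of operads}. Since this bookkeeping is precisely what is done in \cite[Theorem 3.11]{heutsland}, of which this lemma forms a part, one may alternatively just cite that result.
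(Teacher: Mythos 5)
The paper does not actually prove this lemma: it is recorded verbatim as ``part of the statement of \cite[Theorem 3.11]{heutsland}'', so the official proof is the citation you offer in your last sentence, and to that extent your proposal matches the paper exactly. Your independent sketch is a sensible reconstruction of what the cited proof must do, and its skeleton is right: commutativity of the square gives $\beta\circ\iota\simeq\iota\circ\beta$; the characterisation of $\sigma$ via \Cref{prop:suspensionmorphism} and \Cref{rmk:suspensionHL} reduces everything to an equivalence of left adjoints $\Barc\circ\iota_!\simeq\Sigma_{\E_n}$; and the latter should come from the computation of the bar construction of a free associative algebra.

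If you want the sketch to stand on its own, the one step that genuinely needs work is the ``coordinate-independence'' you mention in passing. In the square \eqref{eq:squareEn} the horizontal $\iota$ and the vertical $\beta$ are built from \emph{different} $\E_1$-factors of $\E_{n+1}\simeq\E_1\otimes\E_1\otimes\E_{n-1}$, so the composite $\Barc\circ\iota_!$ bars along a coordinate other than the one $\iota_!$ just adjoined. Replacing it by the bar construction along the adjoined coordinate amounts to precomposing with the swap automorphism of the two $\E_1$-factors, and that swap is \emph{not} homotopic to the identity automorphism of $\E_2$ (it is an orientation-reversing element of $O(2)$), so the equivalence of the two bar constructions requires an argument (e.g.\ a coordinate-free characterisation of $\Barc$ as a left adjoint, or the $O(2)$-equivariance of the relevant structures) rather than being automatic. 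Two smaller points in the same vein: the identification $\Barc(\mathrm{free}_{\E_1}(A))\simeq\Sigma A$ is carried out internally to the non-stable symmetric monoidal $\infty$-category $\Alg_{\E_n}(\Sp)$, so one must know its tensor product preserves the colimits entering the free-algebra formula and the small resolution; and, as you note, the resulting equivalence of functors must be promoted to an equivalence of monad morphisms over $\Sp$ before \cite[Theorem 3.8]{heutsland} applies. All of this is precisely the content of \cite[Theorem 3.11]{heutsland}, so citing it, as the paper does, is the economical choice.
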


\begin{remark}
\label{rmk:betaKD}
Let us write $K\colon \mathrm{Op}(\Sp) \to \mathrm{coOp}(\Sp)$ for the Koszul duality functor that takes an operad $\mathcal{O}$ first to its bar construction $\Barc(\mathcal{O})$ (which is a cooperad) and then to the termwise Spanier--Whitehead dual $\Barc(\mathcal{O})^{\vee}$, which is an operad. It is a theorem of Ching--Salvatore \cite{chingsalvatore} that $K\E_n \cong s^{-n} \E_n$, see also Malin's proof \cite{malin} or the forthcoming \cite{heutsland2} for an $\infty$-categorical version. Under this identification, the Koszul dual of the morphism $\iota\colon \E_n \to \E_{n+1}$ is (up to an $n+1$-fold shift) precisely the morphism $\beta\colon \E_{n+1} \to s\E_n$
featuring above. A proof of this fact will appear in \cite{heutsland2}.
\end{remark}

Before proving \Cref{thm:En} it will be convenient to establish a certain recognition criterion for composition squares. Let 
\begin{equation}
\label{eq:sqoper}
\begin{tikzcd}
\mathcal{O} \ar{r}{f}\ar{d}{g} & \mathcal{P} \ar{d}{g'} \\
\mathcal{Q} \ar{r}{f'} & \mathcal{R}
\end{tikzcd}
\end{equation}
be a commutative square of operads in spectra. It induces a corresponding square of left adjoint functors between algebra categories
\[
\begin{tikzcd}
\mathrm{Alg}_{\mathcal{O}}(\Sp) \ar{r}{f_!}\ar{d}{g_!} & \mathrm{Alg}_{\mathcal{P}}(\Sp) \ar{d}{g'_!} \\
\mathrm{Alg}_{\mathcal{Q}}(\Sp) \ar{r}{f'_!} & \mathrm{Alg}_{\mathcal{R}}(\Sp).
\end{tikzcd}
\]
Associated to this square is another, namely:
\begin{equation}
\label{eq:BCsquare}
\begin{tikzcd}
\mathrm{Alg}_{\mathcal{O}}(\Sp) \ar{d}{g_!}\ar[phantom, sloped]{dr}[description]{\Rightarrow} & \mathrm{Alg}_{\mathcal{P}}(\Sp) \ar{d}{g'_!}\ar{l}[swap]{f^*} \\
\mathrm{Alg}_{\mathcal{Q}}(\Sp) & \mathrm{Alg}_{\mathcal{R}}(\Sp) \ar{l}[swap]{(f')^*}.
\end{tikzcd}
\end{equation}
This last square is generally only lax commutative, in the sense that there is a natural transformation $g_! \circ f^* \Rightarrow (f')^*g'_!$. This natural transformation is the adjoint of the composite
\[
f'_! \circ g_! \circ f^* \cong g'_! \circ f_! \circ f^* \Rightarrow g'_!
\]
where the arrow arises from the counit of the adjoint pair $(f_!,f^*)$.

\begin{lemma}
\label{lem:recognitioncomposition}
The square of operads (\ref{eq:sqoper}) is a composition square if and only if the lax commutative square (\ref{eq:BCsquare}) is commutative, i.e., if the natural transformation $g_! \circ f^* \Rightarrow (f')^*g'_!$ is an equivalence.
\end{lemma}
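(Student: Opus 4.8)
The plan is to prove both implications by unwinding what the relative composition product $\mathcal{Q}\circ_{\mathcal{O}}\mathcal{P}$ computes at the level of free algebra monads, and then invoking the fully faithfulness of the operad-to-monad functor (as used in the proof of \Cref{thm:En} via \cite[Theorem 3.8]{heutsland}) to pass between equivalences of bimodules and equivalences of functors. The key observation is that for a left $\mathcal{O}$-module of the form $\iota(X)$, i.e.\ an $\mathcal{O}$-algebra, the relative composition product interacts with $\free$: one has $\free_{\mathcal{M}\circ_{\mathcal{O}}\iota(X)}\simeq \free_{\mathcal{M}}\circ_{\free_{\mathcal{O}}}(\text{the algebra }X)$, where the right-hand side is the two-sided bar construction of the monad $\free_{\mathcal{O}}$ acting on the right on $\free_{\mathcal{M}}$ and on the left on $X$. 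More precisely, since $\free$ is monoidal for $\circ$ and preserves the relevant geometric realisations, applying $\free$ to the simplicial object $\mathcal{M}\circ \mathcal{O}^{\circ\bullet}\circ\iota(X)$ computes the relative tensor product, and this two-sided bar construction of monads is exactly the formula for the left adjoint $g_!$ composed with restriction.

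Concretely, I would first record that for the square \eqref{eq:sqoper}, the canonical map $\mathcal{Q}\circ_{\mathcal{O}}\mathcal{P}\to\mathcal{R}$ is a map of $(\mathcal{Q},\mathcal{P})$-bimodules; by \cite[Theorem 3.8]{heutsland} (fully faithfulness of $\free$ on the relevant subcategory, together with the fact that it detects equivalences), this map is an equivalence of bimodules if and only if for every $\mathcal{P}$-algebra $A$ the induced map of $\mathcal{Q}$-algebras $\free_{\mathcal{Q}\circ_{\mathcal{O}}\mathcal{P}}(\text{underlying of }A)\to\free_{\mathcal{R}}(\ldots)$ — more precisely the map obtained by feeding $\iota(f^\ast A)$ into the composition product — is an equivalence. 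The second step is to identify the left-hand side: because $\free$ is monoidal and commutes with the bar-construction colimit, $\mathcal{Q}\circ_{\mathcal{O}}\iota(f^\ast A)\simeq \iota\big(|\,\free_{\mathcal{Q}}\circ\free_{\mathcal{O}}^{\circ\bullet}\circ f^\ast A\,|\big)$, and the latter realisation is precisely the Bar-resolution formula for $g_!(f^\ast A)$ (using that $g_!$ of a free $\mathcal{O}$-algebra on $Y$ is the free $\mathcal{Q}$-algebra on $Y$, and that $g_!$ preserves geometric realisations). The third step is to observe that under this identification the comparison map $\mathcal{Q}\circ_{\mathcal{O}}\mathcal{P}\to\mathcal{R}$ becomes, after applying $\free$ and evaluating at $A$, exactly the Beck--Chevalley transformation $g_!f^\ast A\to (f')^\ast g'_! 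A$ of \eqref{eq:BCsquare}: both arise by resolving $A$ by free $\mathcal{P}$-algebras, using $g'_!f_!\simeq f'_!g_!$ on free algebras, and taking the colimit. Hence the bimodule map is an equivalence iff the Beck--Chevalley map is an equivalence on all $\mathcal{P}$-algebras, which is the assertion.

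The main obstacle I anticipate is making the identification in the third step fully rigorous — i.e.\ checking that the two a priori different natural transformations (the one obtained by applying $\free$ to the bimodule comparison map, and the Beck--Chevalley transformation defined abstractly as the adjoint of $f'_!g_!f^\ast\simeq g'_!f_!f^\ast\Rightarrow g'_!$) genuinely agree, not just that both are equivalences under the same hypotheses. This is a compatibility-of-bar-constructions bookkeeping issue: one wants to exhibit the Beck--Chevalley map as the realisation of an explicit simplicial map of resolutions, and match it with the simplicial object $\mathcal{Q}\circ\mathcal{O}^{\circ\bullet}\circ\mathcal{P}\to\mathcal{R}$ after applying $\free$. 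A clean way to sidestep some of this is to note that $g_!\circ f^\ast$ is itself a colimit-preserving functor $\mathrm{Alg}_{\mathcal{P}}\to\mathrm{Alg}_{\mathcal{Q}}$, hence determined by its value on free $\mathcal{P}$-algebras together with the bar resolution; on $\free_{\mathcal{P}}(Y)$ one computes directly $g_!f^\ast\free_{\mathcal{P}}(Y)\simeq g_!\free_{\mathcal{O}}(\mathcal{P}\circ\iota(Y)\text{-ish})$, and similarly $(f')^\ast g'_!\free_{\mathcal{P}}(Y)\simeq (f')^\ast\free_{\mathcal{R}}(Y)$, and the Beck--Chevalley map on free algebras is visibly $\free$ applied to the arity-wise map $\mathcal{Q}\circ_{\mathcal{O}}\mathcal{P}\to\mathcal{R}$ smashed with $Y^{\otimes\bullet}$. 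Since both sides of the purported equivalence and the map between them are colimit-preserving in $A$ and agree on free algebras, they agree in general, which gives the lemma.
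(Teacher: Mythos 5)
Your overall strategy coincides with the paper's: identify the Beck--Chevalley transformation $g_!f^* \Rightarrow (f')^*g'_!$ evaluated on a $\mathcal{P}$-algebra $X$ with the map $\mathcal{Q}\circ_{\mathcal{O}}X \to \mathcal{R}\circ_{\mathcal{P}}X$, reduce to free $\mathcal{P}$-algebras $X = \mathcal{P}\circ A$ using preservation of sifted colimits, and observe that on free algebras the map becomes $(\mathcal{Q}\circ_{\mathcal{O}}\mathcal{P})\circ A \to \mathcal{R}\circ A$. Your concern in the third step about matching the two a priori different transformations is reasonable but the resolution you sketch (both are determined by their restriction to free algebras plus sifted-colimit preservation) is essentially what the paper does implicitly.

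There is, however, one genuine gap: your justification of the step ``the bimodule map $\mathcal{Q}\circ_{\mathcal{O}}\mathcal{P}\to\mathcal{R}$ is an equivalence if and only if the induced natural transformation of functors is an equivalence'' by appeal to \cite[Theorem 3.8]{heutsland}. That theorem asserts fully faithfulness of the operad-to-monad functor only on the subcategory of $\E_n$-operads and their (de)suspensions, and it is invoked in the proof of \Cref{thm:En} precisely because that argument stays within this subcategory. The present lemma is stated for an \emph{arbitrary} commutative square of operads (and the relevant objects are bimodules, not operads), so the cited result does not apply. The nontrivial direction here is conservativity: if $(\mathcal{Q}\circ_{\mathcal{O}}\mathcal{P})\circ A \to \mathcal{R}\circ A$ is an equivalence for all $A\in\Sp$, why is the underlying map of symmetric sequences an equivalence? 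The paper supplies this by Goodwillie calculus applied to $\Fun^{\omega}(\Sp,\Sp)$: for a symmetric sequence $F$, the derivatives of the functor $F\circ(-)$ recover $F$ itself, so the assignment $F\mapsto \free_F$ detects equivalences. You need this (or an equivalent conservativity statement valid for all symmetric sequences) in place of the citation; as written, the ``only if'' direction of your final reduction is unsupported.
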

\begin{proof}
For a $\mathcal{P}$-algebra $X$, the natural transformation of the lemma can be identified as the evident map
\[
\mathcal{Q} \circ_{\mathcal{O}} X \to \mathcal{R} \circ_{\mathcal{P}} X.
\]
On the left-hand side $X$ is implicitly regarded as an $\mathcal{O}$-algebra via $f^*$. Since both expressions preserve sifted colimits in the variable $X$, the map is an equivalence if and only if it is so in the special case of free $\mathcal{P}$-algebras $X = \mathcal{P} \circ A$ with $A \in \Sp$. Then it reduces to the natural map
\[
(\mathcal{Q} \circ_{\mathcal{O}} \mathcal{P}) \circ A \to \mathcal{R} \circ A.
\]
This is a natural equivalence if and only if the underlying map of symmetric sequences $\mathcal{Q} \circ_{\mathcal{O}} \mathcal{P} \to \mathcal{R}$ is an equivalence; indeed, the `if'-direction is clear, whereas the `only if' follows by taking the Goodwillie derivatives of the natural transformation above. To be precise, we apply Goodwillie calculus to the $\infty$-category $\mathrm{Fun}^{\omega}(\Sp,\Sp)$ of endofunctors preserving filtered colimits (see \cite[Section 6.1]{HA} and \cite[Section 3.1.3]{blansblom}) and use that for a symmetric sequence $F$, the derivatives of the functor $F \circ (-)$ are naturally equivalent to $F$ itself.
\end{proof}

\begin{proof}[Proof of \Cref{thm:En}]
In the commutative square (\ref{eq:squareEn}) we can take right adjoints of the horizontal functors to obtain the lax square
\[
\begin{tikzcd}
\Alg_{\E_n}(\Sp) \ar{d}{\Barc}\ar[phantom, sloped]{dr}[description]{\Rightarrow} & \Alg_{\E_{n+1}}(\Sp) \ar{d}{\Barc} \ar{l}[swap]{\iota^*} \\
\Alg_{\E_{n-1}}(\Sp) & \Alg_{\E_n}(\Sp) \ar{l}[swap]{\iota^*}.
\end{tikzcd}
\]
By Lemma \ref{lem:recognitioncomposition} it will suffice to show that this square in fact commutes (up to natural equivalence). But this is clear from the fact that $\iota^*$ preserves tensor products and sifted colimits, which are the two ingredients to form the bar construction.
\end{proof}

\section{Examples and higher enveloping algebras}

We start this section by deducing the composition squares of \Cref{thm:LieAssCom} and \Cref{thm:specialcases} and afterwards discuss the implications for higher enveloping algebras. In the following proofs, we will need the following observation on (co)limits of composition squares:

\begin{lemma}
\label{lem:gluingsquares}
A filtered colimit of composition squares is a composition square. Dually, consider a cofiltered diagram $\{S_i\}_{i \in I}$ of composition squares, with each $S_i$ of the form
\[
\begin{tikzcd}
\mathcal{O}_i \ar{r}\ar{d} & \mathcal{P}_i \ar{d} \\
\mathcal{Q}_i \ar{r} & \mathcal{R}_i.
\end{tikzcd}
\]
Suppose that $\mathcal{O}_i(1)$ is equivalent to the monoidal unit for all $i$ and similarly for $\mathcal{P}_i$ and $\mathcal{Q}_i$. Furthermore, suppose that the operads $\mathcal{O}_i$ and $\mathrm{lim}_{i \in I} \mathcal{O}_i$ are termwise dualizable, and similarly for $\mathcal{P}_i$ and $\mathcal{Q}_i$. Then the limit $\mathrm{lim}_{i \in I} S_i$ is also a composition square.
\end{lemma}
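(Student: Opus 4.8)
The plan is to treat the two assertions separately, the filtered one being formal and the cofiltered one being where the hypotheses are used. For the filtered case: the composition product $\circ$ on $\sSeq(\Sp)$ preserves filtered colimits separately in each variable, being built from smash products, coproducts and homotopy orbits; hence filtered colimits of operads — and of bimodules over them — are computed on underlying symmetric sequences, and they commute with the geometric realizations $\colim_{\mathbf{\Delta}^{\op}}$ defining relative composition products. For a filtered system $\{S_i\}_{i\in I}$ of composition squares with corners $\mathcal O_i,\mathcal P_i,\mathcal Q_i,\mathcal R_i$ I would therefore chain
\[
(\colim_i\mathcal Q_i)\circ_{\colim_i\mathcal O_i}(\colim_i\mathcal P_i)\;\simeq\;\colim_i(\mathcal Q_i\circ_{\mathcal O_i}\mathcal P_i)\;\simeq\;\colim_i\mathcal R_i ,
\]
the first equivalence by commuting $\colim_i$ past $\circ$ and past $\colim_{\mathbf{\Delta}^{\op}}$, the second because each $S_i$ is a composition square; a short diagram chase identifies this composite with the canonical comparison map.

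\emph{Cofiltered limits, reduction to a finite diagram.} Write $\mathcal O=\lim_i\mathcal O_i$ and likewise for $\mathcal P,\mathcal Q,\mathcal R$; since the forgetful functor $\Op(\Sp)\to\sSeq(\Sp)$ creates limits, these are computed termwise, so $\mathcal O(r)=\lim_i\mathcal O_i(r)$, etc. It suffices to show the canonical map $\mathcal Q\circ_{\mathcal O}\mathcal P\to\mathcal R$ is an equivalence in each arity $r$, and the crux is that $(\mathcal Q\circ_{\mathcal O}\mathcal P)(r)$ and each $(\mathcal Q_i\circ_{\mathcal O_i}\mathcal P_i)(r)$ is a \emph{finite} colimit of \emph{dualizable} spectra. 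For finiteness I would use that $\mathcal O$ is nonunital with $\mathcal O(1)\simeq\bS$ — the latter because $\lim_i$ of the constant diagram at $\bS$ over the weakly contractible category $I$ is $\bS$ — to run a level-tree count: a nondegenerate $p$-simplex of the two-sided bar construction $\Barc_\bullet(\mathcal Q,\mathcal O,\mathcal P)$ in arity $r$ is a level tree with $r$ leaves carrying a branching vertex on each of its $p$ inner ($\mathcal O$-decorated) levels, and a tree with $r$ leaves has total branching $r-1$, so $p\le r-1$; hence in arity $r$ the realization equals its $(r-1)$-skeleton, the finite colimit $\colim_{[p]\in\mathbf{\Delta}_{\le r-1}^{\op}}\Barc_p(\mathcal Q,\mathcal O,\mathcal P)(r)$. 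For dualizability of the terms, I would use that for nonunital $A,B$ the group $\Sigma_k$ acts freely on the ordered partitions $r=r_1+\dots+r_k$ ($r_i\ge 1$) indexing the summands of $B^{\otimes k}(r)$, so $B^{\otimes k}(r)$ is a free $\Sigma_k$-spectrum, the homotopy orbits in $(A\circ B)(r)=\bigoplus_{k\le r}(A(k)\otimes B^{\otimes k}(r))_{h\Sigma_k}$ become honest ones, and $(A\circ B)(r)$ is a finite wedge of spectra $A(k)\otimes B(r_1)\otimes\dots\otimes B(r_k)$; iterating, $\Barc_p(\mathcal Q,\mathcal O,\mathcal P)(r)$ is a finite wedge of smash products of the entries $\mathcal Q(a),\mathcal O(b),\mathcal P(c)$ with $a,b,c\le r$, hence dualizable, and so is the finite colimit presenting $(\mathcal Q\circ_{\mathcal O}\mathcal P)(r)$.

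\emph{Cofiltered limits, conclusion via duality.} I would then dualize: $\Barc_p(\mathcal Q,\mathcal O,\mathcal P)(r)^\vee$ is a finite product of smash products of the duals $\mathcal Q(a)^\vee,\mathcal O(b)^\vee,\mathcal P(c)^\vee$, and since Spanier--Whitehead duality sends the termwise cofiltered limits to filtered colimits — $\mathcal O(b)^\vee=(\lim_i\mathcal O_i(b))^\vee\simeq\colim_i\mathcal O_i(b)^\vee$, using dualizability of the $\mathcal O_i(b)$ and of their limit — while finite wedges, finite products and smash products commute with filtered colimits, one gets $\Barc_p(\mathcal Q,\mathcal O,\mathcal P)(r)^\vee\simeq\colim_i\Barc_p(\mathcal Q_i,\mathcal O_i,\mathcal P_i)(r)^\vee$, naturally in $[p]$. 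Dualizing the finite-colimit presentation, commuting the resulting finite limit past the filtered colimit (finite limits commute with filtered colimits in $\Sp$), and redualizing — using that both sides are dualizable and $(\colim Z_i)^\vee\simeq\lim Z_i^\vee$ — yields
\[
(\mathcal Q\circ_{\mathcal O}\mathcal P)(r)\;\simeq\;\lim_i(\mathcal Q_i\circ_{\mathcal O_i}\mathcal P_i)(r)\;\simeq\;\lim_i\mathcal R_i(r)\;=\;\mathcal R(r) ,
\]
the middle step by the composition-square hypothesis on the $S_i$; tracking naturality through the construction identifies this with the canonical map $\mathcal Q\circ_{\mathcal O}\mathcal P\to\mathcal R$.

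I expect the cofiltered half to be the main obstacle. Relative composition products are formed from geometric realizations and composition products, neither of which commutes with limits, so one is forced first to truncate the bar construction to a finite diagram of dualizable spectra — precisely where nonunitality and the arity-$1$ hypothesis enter — and then to trade the limit for a filtered colimit via duality, where the dualizability hypotheses enter. What remains is bookkeeping: verifying that the resulting chain of equivalences is natural and does compute the canonical comparison map.
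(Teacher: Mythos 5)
Your strategy is the paper's: the filtered half is formal, and for the cofiltered half you first use nonunitality together with $\mathcal{O}_i(1)\simeq\bS$ to see that in each arity $r$ the bar construction is degenerate above simplicial degree $r-1$, hence a finite colimit of terms which are finite wedges of smash products of entries of the operads, and then you invoke dualizability to commute the cofiltered limit past these terms. The level-tree count and the identification of each $\Barc_p(\mathcal{Q},\mathcal{O},\mathcal{P})(r)$ as a finite wedge of smash products are correct elaborations of what the paper leaves implicit.

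However, your execution of the dualizability step rests on a false claim: that $(\lim_i Z_i)^\vee \simeq \colim_i Z_i^\vee$ whenever the $Z_i$ and $\lim_i Z_i$ are dualizable. Spanier--Whitehead duality converts colimits into limits, but not limits into colimits, and dualizability of the terms and of the limit does not repair this. For a counterexample, let $v\colon \Sigma^8\bS/2\to\bS/2$ be a $v_1^4$-self-map and consider the tower $Z_n=\Sigma^{8n}\bS/2$ with transition maps the shifts of $v$. Since $\bS/2$ is connective, the tower of homotopy groups is pro-zero in each degree, so $\lim_n Z_n\simeq 0$, which is certainly dualizable; but $\colim_n Z_n^\vee$ is a shift of the telescope $v_1^{-1}\bS/2$, which is nonzero (it has nontrivial $K(1)$-homology). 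So your displayed equivalence $\Barc_p(\mathcal{Q},\mathcal{O},\mathcal{P})(r)^\vee\simeq\colim_i\Barc_p(\mathcal{Q}_i,\mathcal{O}_i,\mathcal{P}_i)(r)^\vee$ is unjustified and the chain breaks there. The fix --- and the paper's actual argument --- is not to dualize at all: for dualizable $M$ one has $M\otimes X\simeq \map(M^\vee,X)$, so $M\otimes -$ preserves limits. Since the diagonal $I\to I\times\cdots\times I$ is initial for cofiltered $I$, one may pull the limit out of each smash factor one at a time,
\[
\lim_i\bigl(A^1_i\otimes\cdots\otimes A^m_i\bigr)\simeq \lim_{i_1}\cdots\lim_{i_m}\bigl(A^1_{i_1}\otimes\cdots\otimes A^m_{i_m}\bigr)\simeq \bigl(\lim_i A^1_i\bigr)\otimes\cdots\otimes\bigl(\lim_i A^m_i\bigr),
\]
which is exactly where dualizability of both the $\mathcal{O}_i(b)$ \emph{and} of $\lim_i\mathcal{O}_i(b)$ (and likewise for $\mathcal{P}$ and $\mathcal{Q}$) is used. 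With this replacement the rest of your argument goes through, since the remaining colimit over $\mathbf{\Delta}^{\mathrm{op}}_{\leq r-1}$ is finite and hence commutes with limits in the stable $\infty$-category $\Sp$.
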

\begin{proof}
The claim about colimits is evident from the fact that the composition product preserves filtered colimits in each variable separately. (The same statement would have worked for sifted colimits.) For the claim about limits, first note that the hypothesis on unary terms implies that for each fixed arity $d$, the bar construction 
\[
\mathrm{colim}_{\mathbf{\Delta}^{\mathrm{op}}}(\mathcal{Q}_i \circ \mathcal{O}_i^{\circ\bullet} \circ \mathcal{P}_i)(d)
\] 
can be calculated by a finite colimit. It will therefore suffice to argue that for each $k$, the natural map
\[
(\mathcal{Q} \circ \mathcal{O}^{\circ k} \circ \mathcal{P})(d) \to \mathrm{lim}_{i \in I} (\mathcal{Q}_i \circ \mathcal{O}_i^{\circ k} \circ \mathcal{P}_i)(d)
\]
is an equivalence. This follows from the dualizability hypotheses we have imposed: indeed, for a dualizable object $M$ the functor $M \otimes -$ preserves limits.
\end{proof}

\begin{proof}[Proofs of \Cref{thm:LieAssCom} and \Cref{thm:specialcases}]
Starting from the composition square
\[
\begin{tikzcd}
\E_{k+m} \ar{d}{\beta}\ar{r}{\iota} & \E_{k+m+n} \ar{d}{\beta} \\
s^k\E_{m} \ar{r}{\iota} & s^k\E_{m+n}.
\end{tikzcd}
\]
of \Cref{thm:En}, we can set $m=0$ and take the colimit as $n$ goes to infinity to obtain a composition square
\[
\begin{tikzcd}
\E_{k} \ar{d}{\beta}\ar{r}{\iota} & \E_{\infty} \ar{d} \\
\mathbf{1} \ar{r}{\iota} & s^k\E_{\infty},
\end{tikzcd}
\]
where we have applied the identification $s^k \mathbf{1} \simeq \mathbf{1}$. To identify the right-hand vertical map we consider the commutative diagram (using Lemma \ref{lem:betaiota}):
\[
\begin{tikzcd}
\E_n \ar{r}{\iota}\ar{d}{\beta}\ar{dr}{\sigma} & \E_{n+1} \ar{r}{\iota}\ar{d}{\beta}\ar{dr}{\sigma}  & \E_{n+2} \ar{r}{\iota}\ar{d}{\beta}\ar{dr}{\sigma}  & \cdots \ar{r} & \E_{\infty} \ar{d} \\
s\E_{n-1} \ar{r} & s\E_n \ar{r}{\iota} & s\E_{n+1} \ar{r}{\iota} & \cdots \ar{r} & s \E_{\infty}.
\end{tikzcd}
\]
The right-hand vertical map is, by construction, the colimit of the vertical arrows $\beta$. The commutativity of the diagram shows that this arrow must be equivalent to the colimit over $n$ of the slanted morphisms $\sigma\colon \E_n \to s\E_n$, which is indeed the morphism $\sigma\colon \E_{\infty} \to s\E_{\infty}$ appearing in the statement of \Cref{thm:specialcases}. (Indeed, since the suspension $\sigma$ for an arbitrary operad $\mathcal{O}$ is defined by levelwise tensoring $\mathcal{O}$ with the suspension morphism for $\E_\infty$, it is clear that $\sigma$ is compatible with filtered colimits of operads.)

Applying the $k$-fold operadic desuspension to the composition square of \Cref{thm:En} yields a composition square
\[
\begin{tikzcd}
s^{-k}\E_{k+m} \ar{d}{\beta}\ar{r}{\iota} & s^{-k}\E_{k+m+n} \ar{d}{\beta} \\
\E_{m} \ar{r}{\iota} & \E_{m+n}.
\end{tikzcd}
\]
Now we take the limit as $k$ goes to $\infty$. Remark \ref{rmk:betaKD} allows us to identify the limit over $k$ of the maps $\beta\colon s^{-k} \E_k \to \mathbf{1}$ with the Koszul dual of the colimit over $k$ of the maps $\iota\colon \mathbf{1} \to \E_k$. Thus $\varprojlim_k s^{-k} \E_k$ is the Koszul dual of $\E_{\infty}$, which is the spectral Lie operad $\mathbb{L}$. (This identification of the inverse limit of shifted $\E_k$-operads with $\mathbb{L}$ was first obtained by Ching--Salvatore \cite{chingsalvatore}.) Therefore, the limit of the squares above becomes
\[
\begin{tikzcd}
s^{m}\mathbb{L} \ar{d}{\beta}\ar{r}{\sigma^n} & s^{n+m}\mathbb{L} \ar{d}{\beta} \\
\E_m \ar{r}{\iota} & \E_{m+n}.
\end{tikzcd}
\]
The identification of the top horizontal morphism with the $n$-fold suspension $\sigma^n$ is entirely analogous to the argument in the first half of this proof. Now taking $m=0$ gives the second composition square of \Cref{thm:specialcases}, whereas setting $m=1$ and taking the colimit as $n$ goes to $\infty$ gives the composition square of \Cref{thm:LieAssCom}. For this last part observe that for any operad $\mathcal{O}$, the colimit $\mathrm{colim}_n s^n \mathcal{O}$ along the suspension maps $\sigma$ is equivalent to $\mathcal{O}(1)$, interpreted as an operad concentrated in arity 1. Indeed, the case of general $\mathcal{O}$ is implied by the special case $\mathcal{O} = \E_\infty$. Observe that there is an identification of spectra $(s^n \E_{\infty})(k) = \mathbb{S}^{n(k-1)}$. For $k > 1$ the colimit is therefore arbitrarily connected, hence contractible.
\end{proof}

Recall that the morphism $\beta\colon s^n\mathbb{L} \to \E_n$ appearing in \Cref{thm:specialcases} induces a functor \[ \beta_!\colon \Alg_{s^n\mathbb{L}}(\Sp) \to \Alg_{\E_n}(\Sp)\] or, after identifying the $\infty$-categories $\Alg_{s^n\mathbb{L}}(\Sp)$ and $\Alg_{\mathbb{L}}(\Sp)$ via degree shifting, a functor 
\[
U_n\colon \Alg_{\mathbb{L}}(\Sp) \to \Alg_{\E_n}(\Sp)
\]
that we refer to as the $\E_n$-enveloping algebra functor. \Cref{thm:Un}, which describes $U_n$ as the composition $\mathrm{CE} \circ \Omega^n$, is now straightforward to deduce from our results:

\begin{proof}[Proof of \Cref{thm:Un}]
By Lemma \ref{lem:recognitioncomposition}, the first composition square of \Cref{thm:specialcases} gives a commutative square
\[
\begin{tikzcd}
\mathrm{Alg}_{\mathbb{L}}(\Sp) \ar{d}{\mathrm{CE}} & \mathrm{Alg}_{s^n\mathbb{L}}(\Sp) \ar{l}[swap]{(\sigma^n)^*} \ar{d}{\beta_!} \\
\Sp & \Alg_{\E_n}(\Sp) \ar{l}{\mathrm{forget}}.
\end{tikzcd}
\]
By \Cref{prop:suspensionmorphism}, the top arrow can be identified with the $n$-fold loop functor of Lie algebras through the following commutative diagram, where the vertical arrow is the $n$-fold shift:
\[
\begin{tikzcd}
\mathrm{Alg}_{s^n\mathbb{L}}(\Sp) \ar{r}{(\sigma^n)^*} \ar{d}{\simeq} & \mathrm{Alg}_{\mathbb{L}}(\Sp). \\
\mathrm{Alg}_{\mathbb{L}}(\Sp) \ar{ur}[swap]{\Omega^n_{\mathbb{L}}} & 
\end{tikzcd}
\]
This concludes the proof.
\end{proof}

We establish one further family of composition squares to be used in our discussion of the PBW theorem below. We will write $\E_k^{\vee}$ for the termwise Spanier--Whitehead dual of the (stable, nonunital) $\E_k$-operad and use the `self-duality' of the $\E_k$-operad established by Ching--Salvatore  \cite{chingsalvatore}, which in particular implies an equivalence of symmetric sequences
\[
\mathbf{1} \circ_{\E_k} \mathbf{1} \cong s^k\E_k^{\vee}.
\]

\begin{proposition}
\label{prop:compositionEnPBW}
There is a composition square as follows:
\[
\begin{tikzcd}
\E_{k+n} \ar{d}{\beta}\ar{r} & \mathbf{1} \ar{d} \\
s^k\E_n \ar{r} & s^k\E_k^{\vee}.
\end{tikzcd}
\]
\end{proposition}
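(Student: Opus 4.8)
The plan is to deduce the proposition from \Cref{thm:En} together with the Ching--Salvatore self-duality equivalence $\mathbf{1}\circ_{\E_k}\mathbf{1}\simeq s^k\E_k^\vee$, using only the associativity and unitality of the relative composition product. The point is that \Cref{thm:En} already identifies $s^k\E_n$, as a right $\E_{k+n}$-module, with a relative composition product over $\E_k$, after which the bar construction computing $s^k\E_n\circ_{\E_{k+n}}\mathbf{1}$ collapses onto $\mathbf{1}\circ_{\E_k}\mathbf{1}$.

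Concretely, I would first specialise \Cref{thm:En} to $m=0$. Since $\E_0=\mathbf{1}$ and $s^k\mathbf{1}\simeq\mathbf{1}$, this is the composition square
\[
\begin{tikzcd}
\E_{k} \ar{d}{\beta}\ar{r}{\iota} & \E_{k+n} \ar{d}{\beta} \\
\mathbf{1} \ar{r}{\iota} & s^k\E_{n},
\end{tikzcd}
\]
whose left vertical $\beta\colon\E_k\to s^k\E_0=\mathbf{1}$ is the augmentation. Being a composition square, it furnishes an equivalence $\mathbf{1}\circ_{\E_k}\E_{k+n}\xrightarrow{\ \simeq\ }s^k\E_n$ of $\mathbf{1}$-$\E_{k+n}$-bimodules; in particular the right $\E_{k+n}$-module structure carried by the target is precisely the one induced by the map $\beta\colon\E_{k+n}\to s^k\E_n$ appearing in the statement of the proposition.

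Viewing $\mathbf{1}$ as a left $\E_{k+n}$-module via the augmentation, I would then read off
\[
s^k\E_n\circ_{\E_{k+n}}\mathbf{1}\ \simeq\ \bigl(\mathbf{1}\circ_{\E_k}\E_{k+n}\bigr)\circ_{\E_{k+n}}\mathbf{1}\ \simeq\ \mathbf{1}\circ_{\E_k}\bigl(\E_{k+n}\circ_{\E_{k+n}}\mathbf{1}\bigr)\ \simeq\ \mathbf{1}\circ_{\E_k}\mathbf{1}\ \simeq\ s^k\E_k^\vee ,
\]
where the first equivalence is the one just obtained, the second is associativity of the relative composition product, the third is the unit relation $\E_{k+n}\circ_{\E_{k+n}}\mathbf{1}\simeq\mathbf{1}$ — and here the induced left $\E_k$-module structure on $\mathbf{1}$ is the trivial one, since $\mathrm{aug}\circ\iota=\mathrm{aug}$, so that we genuinely land in the self-duality situation — and the last is the Ching--Salvatore equivalence. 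To package this as a composition square I would take the square with the augmentations $\E_{k+n}\to\mathbf{1}$ and the induced map $s^k\E_n\to s^k\E_k^\vee$ along the top and the evident unit and $\beta$ maps along the sides, and invoke \Cref{lem:recognitioncomposition}: the displayed chain exhibits the map $s^k\E_n\circ_{\E_{k+n}}\mathbf{1}\to s^k\E_k^\vee$ it induces as an equivalence. Equivalently, one may realise the desired square as the horizontal composite of the square of \Cref{thm:En} above with the self-duality square (the $n=0$ case of the present statement), and use that mates of pastings are pastings of mates.

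The main obstacle is the bookkeeping hidden in the middle step: one must check that the module structures throughout are the intended ones — most importantly that \Cref{thm:En} produces the right $\E_{k+n}$-action on $s^k\E_n$ through $\beta$, so that it matches the left vertical of the square we are after — that the associativity isomorphism for relative composition products is available at this level of generality, and that the Ching--Salvatore self-duality equivalence is realised by an honest commutative square of operads, so that the four equivalences above splice into one induced by a single commutative square. Granting these compatibilities, the (implicit and explicit) applications of \Cref{lem:recognitioncomposition} conclude the proof.
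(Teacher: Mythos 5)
Your proposal is correct and follows essentially the same route as the paper: the paper pastes the $m=0$ case of \Cref{thm:En} horizontally with the tautological composition square defining $s^k\E_n\circ_{\E_{k+n}}\mathbf{1}$ to conclude $s^k\E_n\circ_{\E_{k+n}}\mathbf{1}\simeq\mathbf{1}\circ_{\E_k}\mathbf{1}\simeq s^k\E_k^{\vee}$, and your explicit chain of equivalences via associativity and unitality of the relative composition product is exactly what that pasting step amounts to. The bookkeeping concerns you raise (bimodule structures, self-duality) are handled in the paper by the same identifications you invoke, so no substantive difference remains.
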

\begin{proof}\
Consider the following diagram:
\[
\begin{tikzcd}
\E_k \ar{d}\ar{r}{\iota} & \E_{k+n} \ar{d}{\beta} \ar{r} & \mathbf{1} \ar{d} \\
\mathbf{1} \ar{r} & s^k\E_n \ar{r} & s^k\E_n \circ_{\E_{k+n}} \mathbf{1}.
\end{tikzcd}
\]
The left square is a composition square by the case $m=0$ of \Cref{thm:En}, the right square is a composition square by construction. Composing the two shows that the outer rectangle is a composition square, giving an equivalence $s^k\E_n \circ_{\E_{k+n}} \mathbf{1} \simeq \mathbf{1} \circ_{\E_k} \mathbf{1}$. The conclusion now follows from the display preceding the statement of the proposition.
\end{proof}

So far we have focused on commutative squares of operads that are `composition squares'. A natural question is whether our squares, e.g.\ the one of \Cref{thm:LieAssCom} involving the Lie, associative, and commutative operads, are also pushout squares in the $\infty$-category of operads. In the 1-category of operads (in abelian groups, say) the corresponding square \emph{is} indeed a pushout, as one may verify by using the fact that the three operads involved have straightforward presentations with generators in arity 2 and relations in arity 3. However, this turns out to be a bit of an accident:

\begin{proposition}
\label{prop:notpushouts}
The square
\[
\begin{tikzcd}
s\mathbb{L} \ar{d}\ar{r} & \mathbf{1} \ar{d} \\
\E_1 \ar{r} & \E_{\infty}
\end{tikzcd}
\]
is not a pushout in the $\infty$-category of operads in $\Sp$. As a consequence, it cannot be the case that the square of \Cref{thm:En} is a pushout for all \(k\) and \(n\) when \(m=0\).
\end{proposition}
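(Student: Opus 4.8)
The plan is to disprove the pushout claim by testing the putative universal property against a concrete target operad, i.e.\ by finding an operad $\mathcal R$ receiving compatible maps from $\mathbf 1$ and $\E_1$ that agree on $s\mathbb L$, but which does not factor through $\E_\infty$. The cleanest choice of test data is to take $\mathcal R = \E_2$, equipped with the map $\iota\colon\E_1\to\E_2$ along one of its $\E_1$-factors and with the map $\mathbf 1 \to \E_2$ picking out the unit in arity $1$. For these to assemble into a cocone under the square, one must check that the two composites $s\mathbb L \to \E_1 \to \E_2$ and $s\mathbb L \to \mathbf 1 \to \E_2$ agree; this is precisely the content of the commutative square of \Cref{thm:specialcases} (the first one, with $n=2$: the map $s^2\mathbb L \to \E_2$ desuspends/compares appropriately, and the compatibility is exactly what \Cref{lem:betaiota} and the construction of $\beta$ give us). Granting that, a pushout would force a factorisation $\E_\infty \to \E_2$ through $\iota\colon\E_1\to\E_2$ and the collapse $\mathbf 1\to\E_\infty$; but $\E_\infty(2) \simeq \mathbb S$ while $\E_2(2) \simeq \Sigma^\infty_+ S^1$, and any map of operads $\E_\infty \to \E_2$ would have to be a map of $\E_\infty$-rings $\mathbb S \to \Sigma^\infty_+ S^1$ in arity $2$ compatible with the $\Sigma_2$-actions and with the structure maps; the point is that no such cocone factorisation can exist because the composite $\E_1 \to \E_\infty \to \E_2$ would then have to equal $\iota$, which is impossible since $\iota$ is not null on the relevant arity-$2$ homotopy while the factorisation forces it to pass through the arity-$2$ unit sphere $\E_\infty(2)$ in a way incompatible with $\iota$ being an equivalence onto its image. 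I would phrase this last step via homotopy groups: $\pi_1 \E_2(2) = \Z$ (generated by the Browder bracket class) surjects in an appropriate sense, whereas $\pi_1$ of $\E_\infty(2)\simeq\mathbb S$ in the relevant degree vanishes, so the factorisation cannot reproduce $\iota$.

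A cleaner variant, which I would actually write up, avoids choosing $\E_2$ and instead works at the level of algebras, using \Cref{lem:recognitioncomposition} as a template. If the square were a pushout of operads, then applying $\Alg_{(-)}$ would turn it into a pullback of $\infty$-categories (since $\mathcal O\mapsto\Alg_{\mathcal O}$ sends colimits of operads to limits of categories), i.e.\ $\Alg_{s\mathbb L}(\Sp) \simeq \Alg_{\E_1}(\Sp)\times_{\Alg_{\mathbf 1}(\Sp)}\Alg_{\Sp}$ along the relevant restriction functors; unwinding, this says that giving a shifted spectral Lie algebra structure on $X$ is the same as giving an associative algebra structure on $\Sigma X$ together with the extra datum that its image in $\Sp$ (via the trivial operad) is the same as the one coming from $X$ — but the trivial operad imposes no structure, so the claim collapses to: $\Alg_{s\mathbb L}(\Sp) \simeq \Alg_{\E_1}(\Sp)$, i.e.\ shifted spectral Lie algebras would be the same as associative algebras. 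This is visibly false: for instance, the free $\E_1$-algebra and the free (shifted) spectral Lie algebra on a spectrum $X$ have different underlying spectra — $\free_{\E_1}(X) = \bigoplus_{n\ge 0} X^{\otimes n}$ while $\free_{s\mathbb L}(X)$ has arity-$n$ part $(s\mathbb L(n)\otimes X^{\otimes n})_{h\Sigma_n}$ with $s\mathbb L(n)$ a nontrivial spectrum (e.g.\ $s\mathbb L(2) = \mathbb S^{0}$ suitably, but $s\mathbb L(3)$ is not the $\Sigma_3$-representation appearing in $\E_1$), so already the underlying functors disagree. Hence the square cannot be a pushout.

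For the consequence, I would argue by contradiction: if the square of \Cref{thm:En} were a pushout for all $k,n$ in the case $m=0$, then in particular (taking $n\to\infty$ and $k=1$, using that the pushout of operads commutes with the filtered colimit in the $\E_{k+m+n}$-corner) the square
\[
\begin{tikzcd}
\E_{1} \ar{d}{\beta}\ar{r}{\iota} & \E_{\infty} \ar{d} \\
\mathbf{1} \ar{r} & s\E_{\infty}
\end{tikzcd}
\]
from the proof of \Cref{thm:specialcases} would be a pushout; but applying a further colimit along the suspension maps (as in the last paragraph of that proof, using $\colim_n s^n\E_\infty \simeq \E_\infty(1) = \mathbf 1$ and the fact that $\colim_n s^n s\mathbb L$-type considerations identify the bottom-left corner after one more colimit) degenerates this into exactly the square of \Cref{thm:LieAssCom} — more precisely, the identification $\varprojlim_k s^{-k}\E_k \simeq \mathbb L$ together with compatibility of pushouts with the relevant filtered colimits would upgrade the pushout property of the $\E_n$-squares to a pushout property of the $s\mathbb L$–$\E_1$–$\mathbf 1$–$\E_\infty$ square, contradicting the first part. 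I expect the \textbf{main obstacle} to be bookkeeping: making precise that pushouts of operads interact correctly with the filtered (co)limits used to pass from \Cref{thm:En} to \Cref{thm:LieAssCom} — in one direction colimits of pushouts are pushouts so that direction is automatic, but one must be careful that the limit over $k$ (which produces $\mathbb L$) is the step that could a priori fail to preserve the pushout property, so the cleanest route is to run the contradiction using only colimits: deduce from "all $\E_n$-squares with $m=0$ are pushouts" that the $(\E_1,\E_\infty,\mathbf 1,s\E_\infty)$ square is a pushout (a colimit in $n$), then the $(\E_1,\E_\infty,\mathbf 1,\E_\infty)$-style collapse is again a colimit, and finally compare with the first part. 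I would double-check that the colimit $\colim_n s^n\E_\infty \simeq \mathbf 1$ step genuinely lands us on the square of \Cref{thm:LieAssCom} after identifying $s\mathbb L$ via the $k$-limit — if that comparison is awkward, the alternative is to note directly that a pushout $\E_1 \amalg_{s\mathbb L} \mathbf 1$ would have arity-$2$ part computing $\E_1(2)\amalg_{s\mathbb L(2)}\mathbf 1$ in a derived sense, whose homotopy does not match $\E_\infty(2) = \mathbb S$, giving the contradiction arity by arity without passing through the algebra-category reformulation.
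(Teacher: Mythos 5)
Your central ``cleaner variant'' refutes the wrong statement. If the square were a pushout of operads, then applying $\Alg_{(-)}(\Sp)$ (which sends colimits of operads to limits of $\infty$-categories) would identify the algebras over the \emph{pushout corner}, i.e.\ it would give $\Alg_{\E_\infty}(\Sp) \simeq \Alg_{\E_1}(\Sp) \times_{\Alg_{s\mathbb{L}}(\Sp)} \Alg_{\mathbf{1}}(\Sp)$ --- ``an $\E_\infty$-structure is an $\E_1$-structure together with a trivialisation of the underlying shifted Lie structure.'' It does \emph{not} give $\Alg_{s\mathbb{L}}(\Sp) \simeq \Alg_{\E_1}(\Sp) \times_{\Sp} \Sp$; you have exchanged the initial and terminal corners of the square. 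Consequently the ``visibly false'' conclusion you derive ($\Alg_{s\mathbb{L}} \simeq \Alg_{\E_1}$) is not implied by the pushout hypothesis, and the statement that \emph{is} implied is a plausible-looking assertion whose failure requires a genuine computation. Your fallback --- computing the pushout ``arity by arity'' as $\E_1(2) \amalg_{s\mathbb{L}(2)} \mathbf{1}(2)$ --- also fails: colimits of operads (algebras for the composition product on $\sSeq(\Sp)$) are not computed on underlying symmetric sequences, for the same reason that a pushout of associative rings is not a pushout of underlying abelian groups. Your first argument (testing the universal property against $\E_2$) could in principle work, but as written neither the existence of the cocone (the agreement of the two composites $s\mathbb{L} \to \E_2$) nor the non-existence of a factorisation through $\E_\infty$ is actually established; the homotopy-group discussion at the end does not cohere into a contradiction.

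For contrast, the paper's proof sidesteps all of this by applying the operadic bar construction $\Barc\colon \Op(\Sp) \to \sSeq(\Sp)$, which \emph{is} colimit-preserving, to transport the hypothetical pushout of operads to a pushout of symmetric sequences --- and those \emph{are} computed arity-wise. Using $\Barc(\E_1) \simeq s\E_1^{\vee}$ (up to dualisation conventions), $\Barc(\E_\infty) \simeq \mathbb{L}^{\vee}$ and $\Barc(s\mathbb{L}) \simeq s\E_\infty^{\vee}$, a pushout would yield a cofibre sequence $\E_\infty^{\vee}(n) \to \E_1^{\vee}(n) \to s^{-1}\mathbb{L}^{\vee}(n)$ for each $n \geq 2$; the integral homology ranks $1$, $n!$, $(n-1)!$ violate the Euler-characteristic constraint as soon as $n \geq 3$. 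If you want to salvage your approach, the correct move is either to adopt this bar-construction reduction, or to genuinely refute $\Alg_{\E_\infty} \simeq \Alg_{\E_1} \times_{\Alg_{s\mathbb{L}}} \Sp$, e.g.\ by comparing free objects on both sides --- but that comparison is essentially the Euler-characteristic count again. Your treatment of the ``consequence'' clause (pasting squares and passing to the colimit in $n$ and the limit in $k$) is in the right spirit and close to the paper's one-line pasting argument, but it of course inherits the gap in the first part.
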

\begin{proof}
The operadic bar construction $\mathrm{Bar}\colon \mathrm{Op}(\Sp) \to \sSeq(\Sp)$ is a colimit-preserving functor. (Generally, the bar construction of augmented $\E_1$-algebras is a left adjoint functor to $\E_1$-coalgebras; composing with the forgetful functor and specializing to operads gives the claim.) This bar construction turns the square of the proposition into the following:
\[
\begin{tikzcd}
s\E_{\infty}^{\vee} \ar{d}\ar{r} & \mathbf{1} \ar{d} \\
s\E_1^{\vee} \ar{r} & \mathbb{L}^{\vee}
\end{tikzcd}
\]
(This happens to be precisely the termwise Spanier--Whitehead dual of the square we started with, up to operadic suspension.) If the square of the proposition were a pushout of operads, then this square would be a pushout of symmetric sequences. However, this is not the case; indeed, if it were a pushout then for $n \geq 2$ there would be a cofibre sequence
\[
\E_{\infty}^{\vee}(n) \to \E_1^{\vee}(n) \to s^{-1}\mathbb{L}^{\vee}(n).
\]
The homology of these spectra is concentrated in degree zero; their integral homology is finitely generated and free of ranks $1$, $n!$, and $(n-1)!$ respectively. The alternating sum of these numbers does not equal zero as soon as $n \geq 3$. This proves the first claim of the proposition. If the squares of \Cref{thm:En} were pushouts for all \(k\) and \(n\) when \(m=0\), then that would imply (by pasting countably many such squares) that the square of the proposition is a pushout, which we have just ruled out.
\end{proof}

\section{Relation to the Poincar\'{e}--Birkhoff--Witt theorem}

Given a classical Lie algebra $\mathfrak{g}$, let us 
denote its  universal enveloping algebra by $U(\mathfrak{g})$. Concretely, it is the quotient of the tensor algebra $T( \mathfrak{g})$ by the two-sided ideal generated by elements   $$ x \otimes y - y \otimes x - [x,y], \ x,y \in \mathfrak{g}.$$
This ideal is not homogeneous, so the natural grading on $T( \mathfrak{g})$ does \textit{not} descend to a grading on $U(\mathfrak{g})$. However, $U(\mathfrak{g}) $ inherits an increasing filtration whose $n^{th}$ piece $F_n(U(\mathfrak{g})) $ consists of all images of elements $x_1 \otimes \ldots \otimes x_k$ with $k\leq n$   under the quotient map $T(\mathfrak{g})\rightarrow U(\mathfrak{g})$.

The canonical map $\mathfrak{g}\rightarrow  U(\mathfrak{g})$ lands in the first filtered piece $F_1 (U(\mathfrak{g}))$, and we therefore obtain a map of modules $$ \mathfrak{g}\rightarrow \gr_1( U(\mathfrak{g})).$$
As the associative product on  $U(\mathfrak{g})$ descends to a commutative product on $\gr( U(\mathfrak{g})) = \bigoplus_i  \gr_i (U(\mathfrak{g}))$, we can induce up to obtain a homomorphism of commutative algebras $\mathrm{Sym} ( \mathfrak{g})\rightarrow \gr ( U(\mathfrak{g})) .$
\begin{theorem}[Poincar\'{e}--Birkhoff--Witt theorem]
	This induced morphism $$\mathrm{Sym} ( \mathfrak{g})\rightarrow \gr ( U(\mathfrak{g})) $$
	is an isomorphism.
\end{theorem}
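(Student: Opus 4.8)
The plan is to deduce the Poincar\'e--Birkhoff--Witt theorem from the one-categorical shadow of \Cref{thm:LieAssCom}, via the word-length filtration (the ``filtration trick'' mentioned in the introduction). First I would record the operadic input: applying homology with $\mathbb{Z}$-coefficients to the composition square of \Cref{thm:LieAssCom}, and using that $H_\ast(s\mathbb{L})$, $H_\ast(\E_1)$ and $H_\ast(\E_\infty)$ are the classical Lie, associative and commutative operads concentrated in degree $0$ (and that these are termwise free, so homology is monoidal and compatible with the relevant bar constructions), one obtains an isomorphism of symmetric sequences of abelian groups $\mathrm{Ass}\circ_{\mathrm{Lie}}\one \cong \mathrm{Com}$, with $\mathrm{Ass}$ regarded as a right $\mathrm{Lie}$-module via the canonical map $\mathrm{Lie}\to\mathrm{Ass}$. (Alternatively this identity is classical and can be checked from the presentations of the three operads by binary generators and ternary relations; either way it may be treated as known.)

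Next I would reinterpret the filtration operadically. Writing $U=\iota_!$ for $\iota\colon\mathrm{Lie}\to\mathrm{Ass}$, one has $U(\mathfrak g)\cong \mathrm{Ass}\circ_{\mathrm{Lie}}\mathfrak g$ with $\mathfrak g$ viewed as a left $\mathrm{Lie}$-module, and the filtration $F_\bullet U(\mathfrak g)$ of the theorem is induced from an exhaustive increasing filtration $F_\bullet\mathrm{Ass}$ of the operad $\mathrm{Ass}$ by sub-right-$\mathrm{Lie}$-modules, where $F_p\mathrm{Ass}$ is spanned by products of at most $p$ Lie monomials (so $F_1\mathrm{Ass}$ is the image of $\mathrm{Lie}\hookrightarrow\mathrm{Ass}$, and $F_\bullet\mathrm{Ass}$ is preserved by the right $\mathrm{Lie}$-action because substituting Lie monomials into a Lie monomial again yields a Lie monomial). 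The crux is to identify the associated graded: there is an evident surjection $\mathrm{Sym}^p\circ\mathrm{Lie}\twoheadrightarrow\gr_p\mathrm{Ass}$, and I claim it is an isomorphism, so that $\gr\mathrm{Ass}\cong\mathrm{Com}\circ\mathrm{Lie}$ with $\mathrm{Lie}$ acting on the right-hand factor. This is precisely Poincar\'e--Birkhoff--Witt for the free Lie algebra; it follows by a Poincar\'e-series count comparing total dimensions in each arity (using Witt's formula $\dim\mathrm{Lie}(m)=(m-1)!$), or equivalently by feeding the operadic identity of the previous paragraph through the filtration.

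Finally, since each $F_p\mathrm{Ass}$ is flat (indeed termwise free), the functor $(-)\circ_{\mathrm{Lie}}\mathfrak g$ carries $F_\bullet\mathrm{Ass}$ to an exhaustive filtration of $U(\mathfrak g)$ whose associated graded is
\[
(\gr\mathrm{Ass})\circ_{\mathrm{Lie}}\mathfrak g \;\cong\; (\mathrm{Com}\circ\mathrm{Lie})\circ_{\mathrm{Lie}}\mathfrak g \;\cong\; \mathrm{Com}\circ(\mathrm{Lie}\circ_{\mathrm{Lie}}\mathfrak g) \;\cong\; \mathrm{Com}\circ\mathfrak g \;\cong\; \mathrm{Sym}(\mathfrak g),
\]
and one then checks that this filtration coincides with $F_\bullet U(\mathfrak g)$ and that the comparison map is the morphism $\mathrm{Sym}(\mathfrak g)\to\gr(U(\mathfrak g))$ in the statement. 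I expect the main obstacle to be the identification of $\gr\mathrm{Ass}$ in the second step --- equivalently, the injectivity of $\mathrm{Sym}(\mathfrak g)\to\gr(U(\mathfrak g))$, which is where all of the content lies (surjectivity being immediate, since $\gr(U(\mathfrak g))$ is commutative and generated in degree one). A secondary technical point is commuting $\gr$ past the relative composition product, which is why the flatness of the $F_p\mathrm{Ass}$ is needed; over a field this is automatic, and in general one works with the explicit filtered bar complex computing $\mathrm{Ass}\circ_{\mathrm{Lie}}\mathfrak g$.
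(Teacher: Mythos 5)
The paper does not actually prove this statement: it is recalled as the \emph{classical} Poincar\'e--Birkhoff--Witt theorem purely to motivate what follows, and the text immediately turns to proving the spectral analogue (\Cref{PBWfull}) instead. So there is no proof in the paper to compare yours against. That said, your outline is the standard operadic proof of the classical result, and it runs structurally parallel to the paper's proof of \Cref{PBWfull}: equip $U(\mathfrak{g})$ with a filtration and identify its associated graded by means of the relative composition product $\E_1 \circ_{s\mathbb{L}} \mathbf{1} \simeq \E_\infty$ of \Cref{PBWtriv}. The one mechanical difference is in how the filtration is produced: you filter the operad $\mathrm{Ass}$ as a right $\mathrm{Lie}$-module by number of Lie factors, whereas the paper places the algebra itself in filtration degree $1$ via the functor $c_1$ and pushes forward (\Cref{pushgr}, following Gaitsgory--Rozenblyum). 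The two devices yield the same filtration classically, but the latter is the one that transports to the $\infty$-categorical setting.

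There is one genuine gap. You assert that identifying $\gr \mathrm{Ass} \cong \mathrm{Com} \circ \mathrm{Lie}$ is ``equivalently'' the injectivity of $\mathrm{Sym}(\mathfrak{g}) \to \gr(U(\mathfrak{g}))$; it is not. The functor $(-) \circ_{\mathrm{Lie}} \mathfrak{g}$ is a coequalizer and hence only right exact in the first variable, so from a filtration of $\mathrm{Ass}$ by right $\mathrm{Lie}$-submodules with identified subquotients you obtain, for a general $\mathfrak{g}$, only a natural \emph{surjection} $\mathrm{Sym}^p(\mathfrak{g}) \twoheadrightarrow \gr_p U(\mathfrak{g})$: the maps $F_{p-1}\mathrm{Ass} \circ_{\mathrm{Lie}} \mathfrak{g} \to F_p\mathrm{Ass} \circ_{\mathrm{Lie}} \mathfrak{g}$ need not be injective. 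Your dimension count settles the universal case ($\mathfrak{g}$ free) but does not descend by itself. The missing step is to split the filtration of $\mathrm{Ass}$ in the category of right $\mathrm{Lie}$-modules --- the graded pieces $\mathrm{Sym}^p \circ \mathrm{Lie}$ are free, hence projective, right modules, so $\mathrm{Ass} \cong \mathrm{Com} \circ \mathrm{Lie}$ as filtered right $\mathrm{Lie}$-modules, after which $(-)\circ_{\mathrm{Lie}}\mathfrak{g}$ applied to a free module poses no exactness problem --- or else to impose a flatness/freeness hypothesis on $\mathfrak{g}$ (as one must: the unrestricted statement is known to fail over general base rings). Relatedly, the flatness you do invoke, namely termwise freeness of $F_p\mathrm{Ass}$ as symmetric sequences, is not the relevant condition; what matters is projectivity of the subquotients as right $\mathrm{Lie}$-modules, since the failure of exactness lives in $\circ_{\mathrm{Lie}}$, not in $\circ$.
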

\noindent \mbox{We will   prove an analogue of this result using 
 the relative composition products   computed above.  }
\begin{remark} We will adopt the general strategy in the proof of  \cite[6.5.2.6]{gaitsgory2017study} to our setting. 
	However, our treatment  differs from the one in [op.\ cit.] as the latter operates over a ground field $k$ and uses the commutative $k$-algebra $k[\epsilon]/(\epsilon^2)$ to generate the required filtrations. 
\end{remark}

\begin{notation} 
	Write $\mathbb{N} $ for (the nerve of) the poset of nonnegative integers   and   $\mathbb{N} ^{\mathrm{disc}}$ for  (the nerve of) the discrete category with the same objects and only  identity morphisms.
	\end{notation}
	\begin{notation}[Filtered and graded spectra]
The $\infty$-categories of (nonnegatively) filtered and graded spectra are defined as $$\Sp^{\Fil} := \Fun( \mathbb{N}  , \Sp) \ \ \ \mbox{and}\ \ \ \Sp^{\gr} := \Fun( \mathbb{N} ^{\mathrm{disc}}, \Sp),$$ respectively. Day convolution (cf.\ \cite{glasman2016day},\cite[2.2.6]{HA}) equips these $\infty$-categories with   symmetric monoidal structures satisfying 
$$ (X \otimes Y)_n \simeq \mycolim{i+j \leq n} (X_i \otimes Y_j) \ \ \ \mbox{and}\ \ \   \ (X \otimes Y)_n  \simeq  \bigoplus_{i+j =n} (X_i \otimes Y_j). $$
There are natural functors 
	$$ \Sp \xrightarrow{c} \Sp^{\Fil}   \ \ \ \ \ \ \ \ 
	  \Sp^{\Fil}  \xrightarrow{\gr} \Sp^{\gr}    
	   \ \ \ \  \ \ \ \  \Sp^{\Fil}  \xrightarrow{\colim} \Sp.$$
Informally,  $c$ sends  $X \in \Sp$ to the constant object $(X \xrightarrow{\id} X  \xrightarrow{\id } \ldots)  \in \Sp^{\Fil}$,
its left adjoint $\colim$ sends $(X_0 \rightarrow X_1 \rightarrow \ldots)$ to $\colim_i X_i$, and 
 the functor $\gr$ sends a filtered spectrum $(X_0 \rightarrow X_1 \rightarrow \ldots )$ to the graded spectrum $(X_0, X_1/X_0, X_2/X_1,  \ldots ) $; we refer to \cite{gwilliam2018enhancing} for   formal constructions.

The functor $c$ is fully faithful and  evidently refines to a symmetric monoidal functor. By   \cite[Proposition A]{haugseng2023lax}, its left adjoint $\colim$ acquires an oplax symmetric monoidal structure, which is in fact symmetric monoidal as $\otimes$ commutes with colimits.
The functor $\gr$ refines to a symmetric  monoidal functor by Theorem 1.13 of \cite{gwilliam2018enhancing}. 
	 \end{notation}

\Cref{sec:symseq} immediately generalises  to  filtered and graded spectra -- in particular, both  $\sSeq(\Sp^{\Fil})$ and  $\sSeq(\Sp^{\gr})$  carry composition products,
which one uses to define  operads  \mbox{and algebras over them.}
 
The  functors
$c$, $\gr$,   and $\colim$ induce 
functors  $\sSeq(\Sp) \rightarrow \sSeq(\Sp^{\Fil}) $,  $\sSeq(\Sp^{\Fil}) \rightarrow \sSeq(\Sp^{\gr})$, and  $\sSeq(\Sp^{\Fil}) \rightarrow \sSeq(\Sp^{})$.
These 
 all refine to  monoidal functors with respect to  the composition products as
 $c$, $\gr$,   and $\colim$ are
 colimit-preserving and symmetric monoidal. Indeed, this is  immediate by unravelling  the 
  definition of the composition products via endofunctors of symmetric monoidal presentable $\infty$-categories.

	Let  us now fix an operad $\mathcal{O} \in \Op(\Sp) = \Alg(\sSeq(\Sp))$   in spectra. Using the 
	monoidal functors  above, we obtain corresponding $\infty$-operads in $ \Sp^{\Fil}$ and $\Sp^{\gr}$, which we   denote by the same name. We
	write 	  $ \Alg_{\mathcal{O}}(\Sp^{\Fil})$ and $ \Alg_{\mathcal{O}}(\Sp^{\gr})$ for the 
	corresponding  $\infty$-categories of $\mathcal{O}$-algebras, and     will refer to them as  filtered and graded $\mathcal{O}$-algebras, respectively. 
	\begin{remark} Note that  a filtered (graded) $\mathcal{O}$-algebra is an $\mathcal{O}$-algebra in filtered (graded) objects, rather than a filtered (graded) object in $\mathcal{O}$-algebras.\end{remark}
	  
Once more using that $c, \gr,$ and $ \colim$ induce monoidal functors for the composition products on symmetric sequences,  
we obtain induced  functors 
	$$\Alg_{\mathcal{O}}( \Sp )\xrightarrow{c} \Alg_{\mathcal{O}}(\Sp^{\Fil} )  \ \ \ \ \ \ \ \ 
\Alg_{\mathcal{O}}(\Sp^{\Fil} ) \xrightarrow{\gr}\Alg_{\mathcal{O}}( \Sp^{\gr}    )
\ \ \ \  \ \ \ \  \Alg_{\mathcal{O}}(\Sp^{\Fil} ) \xrightarrow{\colim} \Alg_{\mathcal{O}}(\Sp). $$

\begin{construction}[Adding filtration] There is also a more interesting functor from $\mathcal{O}$-algebras to filtered $\mathcal{O}$-algebras: given an 
$\mathcal{O}$-algebra $X$, we will equip the 
filtered spectrum 
$$( 0  \rightarrow X  \xrightarrow{\id} X  \xrightarrow{\id}  \ldots)\ $$
with an 
$\mathcal{O}$-algebra structure. 

To this end, consider the  functors 
$$\mathrm{ev}_0,  \colim: \Sp^{\Fil}   \rightarrow \Sp  $$
given by evaluating at $0$ and taking the colimit, respectively.
As both refine to symmetric monoidal colimit-preserving functors,   we obtain induced functors 
$   \Alg_{\mathcal{O}}( \Sp^{\Fil}  ) \rightarrow  \Alg_{\mathcal{O}}( \Sp   )$ on the level of $\mathcal{O}$-algebras, which we denote by the same names, and a natural transformation $\mathrm{ev}_0 \rightarrow  \colim$.
Writing $0:  \Alg_{\mathcal{O}}( \Sp^{\Fil}  ) \rightarrow  \Alg_{\mathcal{O}}( \Sp   )$ for the constant functor on the zero $\mathcal{O}$-algebra, we construct a functor $$L: \Alg_{\mathcal{O}}( \Sp^{\Fil}  ) \rightarrow \Alg_{\mathcal{O}}( \Sp   ) $$
sending $X$ to the colimit (in $\mathcal{O}$-algebras) of the span $ (0 \leftarrow \mathrm{ev}_0(X) \rightarrow \colim(X) )$.

We note that $L$ sits in a commutative square
\[
\begin{tikzcd}
	\Alg_{\mathcal{O}}(\Sp^{\Fil}) \ar{r}{L} &  	\Alg_{\mathcal{O} }(\Sp)        \\
	\Sp^{\Fil} \ar{r}{} \ar{u}& \Sp \ar{u}
\end{tikzcd}
\]
where the vertical arrows are free functors and the lower horizontal functor sends a filtered spectrum $X$ to the cofibre $\cof\left(X_0 \rightarrow \colim_n X_n\right) $. Passing to right adjoints, we obtain a functor 
$$c_1:\Alg_{\mathcal{O}}( \Sp ) \rightarrow \Alg_{\mathcal{O}}( \Sp^{\Fil}  ). $$
lifting the functor  
sending $X$  to the filtered spectrum 
$( 0  \rightarrow X  \xrightarrow{\id} X  \xrightarrow{\id}  \ldots)$
\mbox{starting in degree $1$.}

We obtain a a commutative diagram
\[
\begin{tikzcd}
	\Alg_{\mathcal{O}}(\Sp) \ar{d}\ar{r}{c_1} &  	\Alg_{\mathcal{O} }(\Sp^{\Fil})  \ar{d} \ar{d}\ar{r}{\colim} & \Alg_{\mathcal{O} }(\Sp )  \ar{d}    \\
	\Sp \ar{r}{c_1} & \Sp^{\Fil}\ar{r}{\colim} & \Sp.
\end{tikzcd}
\]
where the vertical arrows are   forgetful functors and the horizontal arrows compose to the identity. In other words, we have equipped every $\mathcal{O}$-algebra with a natural filtration.
\end{construction}
Let us now assume that the operad $\mathcal{O}$ is reduced, i.e.\ that $\mathcal{O}(0) \simeq 0$ and $\mathcal{O}(1) \simeq \mathbb{S}^0$. We obtain a map of operads $\mathcal{O} \rightarrow \mathbf{1}$, where $\mathbf{1}$ is the identity operad. Pulling back along it gives a functor $$\triv: \Sp^{\gr} \rightarrow \Alg_{\mathcal{O}}(\Sp^{\gr}).$$
\begin{proposition}\label{deg1triv}
	If $X \in \Alg_{\mathcal{O}}( \Sp^{\gr})$ is concentrated in degree $1$, then there is an equivalence of graded $\mathcal{O}$-algebras 
	$$ \triv (\forget(X)) \simeq X, $$
where $\forget: \Alg_{\mathcal{O}}(\Sp) \rightarrow \Sp$ is the forgetful functor.
\end{proposition}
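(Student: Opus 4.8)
The plan is to prove the slightly stronger statement that $\forget$ restricts to an equivalence between the full subcategory $\mathcal{A} \subseteq \Alg_{\mathcal{O}}(\Sp^{\gr})$ of algebras concentrated in degree $1$ and the full subcategory $\Sp^{\gr}_{=1} \subseteq \Sp^{\gr}$ of graded spectra concentrated in degree $1$ (the latter being equivalent to $\Sp$ via $X \mapsto X_1$). Since $\triv$ leaves underlying graded spectra unchanged it restricts to a functor $\triv \colon \Sp^{\gr}_{=1} \to \mathcal{A}$, and because $\mathbf{1}$ is initial in $\Op(\Sp)$ the composite $\mathbf{1} \to \mathcal{O} \to \mathbf{1}$ is the identity, so that $\forget \circ \triv \simeq \id$ on $\Sp^{\gr}_{=1}$. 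Hence $\forget\colon \mathcal{A} \to \Sp^{\gr}_{=1}$ is essentially surjective, and it remains only to show it is fully faithful; it is then an equivalence whose inverse is necessarily $\triv$, which yields the asserted (natural) equivalence $\triv(\forget X) \simeq X$.

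The computational input is the principle that in degree $1$ only unary operations can act. Since $\mathcal{O}$ is reduced, for $M \in \Sp^{\gr}$ concentrated in degree $1$ the Day convolution formula gives $\free_{\mathcal{O}}(M)_k \simeq (\mathcal{O}(k) \otimes M_1^{\otimes k})_{h\Sigma_k}$; in particular $\free_{\mathcal{O}}(M)$ is again concentrated in degrees $\ge 1$ with $\free_{\mathcal{O}}(M)_0 \simeq \mathcal{O}(0) \simeq 0$ and $\free_{\mathcal{O}}(M)_1 \simeq \mathcal{O}(1) \otimes M_1 \simeq M_1$. Iterating, each $\free_{\mathcal{O}}^{\circ k}(M)$ has vanishing degree-$0$ part and degree-$1$ part canonically $M_1$, and the unit and multiplication of the monad $\free_{\mathcal{O}}$ — as well as the structure map of any $\mathcal{O}$-algebra concentrated in degree $1$ — restrict in degree $1$ to the identity of $M_1$, by the monad and algebra unit axioms. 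In short, after passing to degree-$1$ parts $\free_{\mathcal{O}}$ becomes the identity monad.

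Granting this, full faithfulness is a mapping-space computation. For $X, Y \in \mathcal{A}$ the monadic bar resolution $\free_{\mathcal{O}}\big(\free_{\mathcal{O}}^{\circ\,\bullet}(\forget X)\big) \xrightarrow{\simeq} X$ together with the free--forgetful adjunction identifies $\Map_{\Alg_{\mathcal{O}}(\Sp^{\gr})}(X, Y)$ with $\lim_{\mathbf{\Delta}} \Map_{\Sp^{\gr}}\big(\free_{\mathcal{O}}^{\circ\,\bullet}(\forget X), \forget Y\big)$. As $\Sp^{\gr} = \prod_{n \ge 0} \Sp$ and $\forget Y$ is concentrated in degree $1$, the $k$-th term depends only on the degree-$1$ part of $\free_{\mathcal{O}}^{\circ k}(\forget X)$, which is $(\forget X)_1$, and by the previous paragraph the cosimplicial structure maps are identities; hence the limit is $\Map_{\Sp}\big((\forget X)_1, (\forget Y)_1\big) \simeq \Map_{\Sp^{\gr}}(\forget X, \forget Y)$, the identification being the map induced by $\forget$. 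Together with the essential surjectivity observed above, this finishes the proof.

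The only genuine obstacle I foresee is the bookkeeping in the middle paragraph — checking that the structure maps of the monadic resolution really become identities in degree $1$, which is an unwinding of the monad and algebra unit/associativity constraints together with the fact that the grading is non-negative and $\mathcal{O}(0) \simeq 0$, so that no operation can reach degree $0$ or disturb degree $1$. A slicker but heavier alternative, which I would mention rather than carry out, is to compute the endomorphism operad in $\Sp^{\gr}$ of a spectrum concentrated in degree $1$: it is concentrated in arities $\le 1$, an $\mathcal{O}$-algebra structure therefore factors through the arity-$\le 1$ truncation $\mathbf{1}$ of $\mathcal{O}$, and since $\mathcal{O}(1) \simeq \mathbb{S}^0$ is the initial associative algebra such a structure is unique up to contractible choice — after which one would still have to upgrade this statement about objects to the equivalence of $\infty$-categories, which is why the bar-resolution argument seems preferable.
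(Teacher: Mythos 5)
Your proof is correct, but it takes a genuinely different route from the paper. You prove that $\forget$ restricts to an equivalence from degree-$1$-concentrated $\mathcal{O}$-algebras onto $\Sp^{\gr}_{=1}$: essential surjectivity via $\triv$, and full faithfulness by resolving $X$ by its monadic bar resolution, using that $\forget Y$ is concentrated in degree $1$ to reduce the resulting totalization to the degree-$1$ parts of $\free_{\mathcal{O}}^{\circ k}(\forget X)$, all of which are $(\forget X)_1$ with structure maps equivalences (here one should say ``equivalences'' rather than ``identities'', and then invoke weak contractibility of $\mathbf{\Delta}$ to conclude the limit is the common value; your degree count, which rests on $\mathcal{O}(0)\simeq 0$, $\mathcal{O}(1)\simeq \mathbb{S}^0$ and the vanishing of the degree-$0$ parts throughout the resolution, is the real content and does go through). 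The paper instead runs the argument at the level of $\infty$-operads in the sense of Lurie: it exhibits the passage from positively graded objects to degree-$1$-concentrated objects as a localisation compatible with the $\mathcal{L}\mathcal{M}^{\otimes}$-monoidal structure encoding the composition product, applies \cite[Proposition 2.2.1.9]{HA} to obtain a localisation adjunction $\Alg_{\mathcal{O}}(\Sp^{\gr})_{+} \leftrightarrows \Alg_{\mathbf{1}}(\Sp^{\gr}_1)$ whose unit at $A$ is precisely the comparison map $A \to \triv(A_1)$, and observes this unit is an equivalence when $A$ is concentrated in degree $1$. This is essentially a rigorous version of the ``endomorphism operad'' alternative you sketch at the end. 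What each buys: your bar-resolution argument is more elementary and self-contained, at the price of the coherence bookkeeping for the cosimplicial diagram; the paper's localisation argument packages that coherence once and for all and produces the natural map $A \to \triv(A_1)$ for \emph{all} positively graded algebras, making the naturality needed for \Cref{cortrivgrad} manifest (your version also yields naturality, since you produce an equivalence of $\infty$-categories with inverse $\triv$, so this is not a gap).
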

\begin{proof}
	We will need to discuss the formalisation of monoidal and tensored $\infty$-categories in Section 4  of \cite{HA}. 
	To this end, we will use the term \textit{$\infty$-operad} in the sense of \cite[2.1.1.10]{HA}, highlighting that it is distinct from (but closely related to)   the term \textit{operad} in  \Cref{operadsinsp}.
	
	We invite the reader to recall the $\infty$-operads $\mathcal{A}\mathrm{ssoc}^{\otimes}$ and $\mathcal{L}\mathcal{M}^{\otimes}$ constructed in \cite[Definition 
	4.1.1.3]{HA} and \cite[Definition 
    4.2.1.7]{HA}, respectively.
 Monoidal $\infty$-categories are formalised as cocartesian fibrations of $\infty$-operads to  $\mathcal{A}\mathrm{ssoc}^{\otimes}$. The $\infty$-operad $\mathcal{L}\mathcal{M}^{\otimes}$
 has two colours $a$ (for algebra) and $m$ (for module), and contains 
 $\mathcal{A}\mathrm{ssoc}^{\otimes}$.
 Cocartesian fibrations of $\infty$-operads $\mathcal{C}^{\otimes }\rightarrow \mathcal{L}\mathcal{M}^{\otimes}$ correspond to pairs of a monoidal $\infty$-category $q^{-1} (a) $  and 
 a  left-$q^{-1} (a)$-tensored 
  $\infty$-category $q^{-1} (m)$; the monoidal structure on 
    $q^{-1} (a) $ is formally given  by 
     $\mathcal{A}\mathrm{ssoc}^{\otimes} \times_{\mathcal{L}\mathcal{M}^{\otimes}}\mathcal{C}^{\otimes }\rightarrow \mathcal{A}\mathrm{ssoc}^{\otimes}$.
    
 Write  $p: \sSeq(\Sp^{\gr})^{\otimes } \rightarrow \mathcal{A}\mathrm{ssoc}^{\otimes}$
  for the cocartesian fibrations  encoding the monoidal composition product $\circ$ on  symmetric sequences in graded  spectra.
The  composition product    makes   $\sSeq(\Sp^{\gr})$  
left-tensored   over itself, and 
  \cite[Example 4.2.1.16]{HA} constructs the corresponding 
  cocartesian fibration  $q: \mathcal{C}^{\otimes} \rightarrow 
  \mathcal{L}\mathcal{M}^{\otimes}$ with $q^{-1}(a)  \simeq  \sSeq(\Sp^{\gr}) \simeq q^{-1}(m)$ and $q \times_{\mathcal{L}\mathcal{M}^{\otimes}} {\mathcal{A}\mathrm{ssoc}^{\otimes}} = p$.
  The underlying $\infty$-category  of $\mathcal{C}^{\otimes}$ is given by $\sSeq(\Sp^{\gr})  \coprod \sSeq(\Sp^{\gr}) = p^{-1}(a) \coprod  p^{-1}(m)$.
  
Let $\sSeq(\Sp)({1}) \subset \sSeq(\Sp)({+}) \subset \sSeq(\Sp^{\gr}) $ be the full subcategories spanned by   symmetric sequences $X$  with $X(n) \simeq  0  $ for $n\neq 1$ or $X(n) \simeq  0  $ for $n= 0$, respectively \mbox{(in graded degree $0$).}
Write $\Sp_{ 1}^{\gr} \subset \Sp_{+}^{\gr} \subset \sSeq(\Sp^{\gr}) $ for the full subcategories spanned by  spectra   in graded degree $1$ or by  spectra  in graded degree $>0$ , respectively (interpreted as symmetric sequences).
Let $\mathcal{C}_+^{\otimes } \subset \mathcal{C}^{\otimes}$ denote the 
 full subcategory  of $\mathcal{C}^{\otimes}$   spanned by those     $X_1 \oplus \ldots \oplus X_n$ for which all $X_i$  over $a$ or $m$ belong to $\sSeq(\Sp)({+})$ or  $\Sp^{\gr}_{+}$, respectively.
 Similarly,  let $\mathcal{C}_1^{\otimes } \subset \mathcal{C}_+^{\otimes}$ be  spanned by those    $X_1 \oplus \ldots \oplus X_n$ for which all $X_i$   over $a$ or $m$ belong to $\sSeq(\Sp)({1})$ or  $\Sp^{\gr}_1$.
 
 By \cite[Proposition 2.2.1.1]{HA}, the composite $\mathcal{C}_+^{\otimes} \hookrightarrow 
 \mathcal{C}^{\otimes} \rightarrow \mathcal{L}\mathcal{M}^{\otimes}$ is a cocartesian fibration of $\infty$-operads
 and $\mathcal{C}_+^{\otimes} \rightarrow 
 \mathcal{C}^{\otimes}$ preserves cocartesian morphisms.
The operad $\mathcal{O}$ gives an operadic section of $p$, which factors over 
$\sSeq(\Sp)(+)^{\otimes} := \mathcal{A}\mathrm{ssoc}^{\otimes}  \times_{\mathcal{L}\mathcal{M}^{\otimes}}\mathcal{C}_+^{\otimes}$ since $\mathcal{O}$ is reduced and in \mbox{degree $0$.} Writing $s_{\mathcal{O}}:\mathcal{A}\mathrm{ssoc}^{\otimes}  \rightarrow \sSeq(\Sp)(+)^{\otimes} $ for the corresponding operadic section, the full subcategory   $ \Alg_{\mathcal{O}}(\Sp^{\gr})_{+} \subset \Alg_{\mathcal{O}}(\Sp^{\gr})  $ of graded $\mathcal{O}$-algebras in positive degrees
is given by the $\infty$-category of operadic sections of $\mathcal{C}_+^{\otimes}  \rightarrow \mathcal{L}\mathcal{M}^{\otimes}$
whose pullback along $\mathcal{A}\mathrm{ssoc}^{\otimes} \rightarrow \mathcal{L}\mathcal{M}^{\otimes}$ \mbox{recovers $s_{\mathcal{O}}$.} In the notation of \cite[2.1.3.1]{HA}, we have $$\Alg_{\mathcal{O}}(\Sp^{\gr})_{+} = \Alg_{/\mathcal{L}\mathcal{M}^{\otimes}}(\mathcal{C}_+^{\otimes} ) \times_{\Alg_{/\mathcal{A}\mathrm{ssoc}^{\otimes}}(\sSeq(\Sp)(+) ^{\otimes} )} \{\mathcal{O}\}.$$

Observe that both $\sSeq(\Sp)(1) \subset \sSeq(\Sp)(+)$
and  $\Sp^{\gr}_1 \subset \Sp^{\gr}_{+}$  are localising subcategories, and that the   localisation functors are  compatible with the $  \mathcal{L}\mathcal{M}^{\otimes}$-monoidal structure in the sense of \cite[Definition 2.2.1.6]{HA}.
By Proposition 2.2.1.9 in \cite{HA},
the inclusion   
$ \mathcal{C}_1^{\otimes}  \subset  \mathcal{C}_+^{\otimes}$ is a map of $\infty$-operads and moreover admits a left adjoint $L^{\otimes}$.

  Postcomposing operadic sections $\mathcal{L}\mathcal{M}^{\otimes} \rightarrow \mathcal{C}_+^{\otimes}$ with this adjunction, we obtain 
an adjunction $$\Alg_{\mathcal{O}} (\Sp^{\gr})_{+}  \leftrightarrows  \Alg_{ \mathbf{1}} (\Sp^{\gr}_1),$$
as the identity operad $ \mathbf{1}$ is the localisation of $\mathcal{O}$.
The value of the unit of this adjunction on a given  $A\in \Alg_{\mathcal{O}} (\Sp^{\gr})_{+} $ is  the localisation map $ A \rightarrow \triv(A_1), $
where  $ \triv(A_1)$ has underlying object  $A_1$ (considered as a graded spectrum concentrated in degree $1$)  and   structure map  given by \mbox{$\mathcal{O} \circ A_1 \rightarrow  \mathbf{1} \circ A_1 =  A_1$.}
If $A$ lies in degree $1$, then this is an equivalence as equivalences are detected on underlying objects.
The above adjunction therefore restricts to an equivalence
$\Alg_{\mathcal{O}} (\Sp^{\gr})_{1}  \xrightarrow{\simeq} \Alg_{ \mathbf{1}} (\Sp^{\gr}_1) \simeq \Sp$ between graded $\mathcal{O}$-algebras concentrated in degree $1$ and spectra.
\end{proof}
\begin{corollary}\label{cortrivgrad}
	Given an $\mathcal{O}$-algebra $X$, there is a natural  equivalence of graded $\mathcal{O}$-algebras   $$ \gr (c_1(X)) \simeq \triv(\forget(X)).$$
\end{corollary}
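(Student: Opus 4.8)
The plan is to compute the underlying graded spectrum of $\gr(c_1(X))$, observe that it is concentrated in degree $1$, and then read off the graded $\mathcal{O}$-algebra structure using \Cref{deg1triv}. To begin, I would unwind the construction of $c_1$: by the two commutative diagrams recorded in the construction preceding \Cref{deg1triv}, both $c_1$ and $\gr$ on $\mathcal{O}$-algebras are compatible, along the forgetful functors, with the corresponding operations on filtered and graded spectra. Hence $\forget(\gr(c_1(X)))$ is naturally identified with the associated graded, computed in plain spectra, of the filtered spectrum $\bigl(0 \to \forget(X) \xrightarrow{\id} \forget(X) \xrightarrow{\id} \cdots\bigr)$ starting in degree $1$. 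Its associated graded has $n$-th term $\cof(0 \to \forget(X)) \simeq \forget(X)$ for $n = 1$ and $\cof(\forget(X) \xrightarrow{\id} \forget(X)) \simeq 0$ otherwise. In other words, $\gr(c_1(X))$ has underlying graded spectrum concentrated in degree $1$ with value $\forget(X)$ there, naturally in $X$.

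Since being concentrated in degree $1$ is a property of the underlying graded spectrum, the previous step shows that $\gr(c_1(X))$ lies in the subcategory $\Alg_{\mathcal{O}}(\Sp^{\gr})_1$ of graded $\mathcal{O}$-algebras concentrated in degree $1$. Now \Cref{deg1triv} applies and produces a natural equivalence $\gr(c_1(X)) \simeq \triv(\forget(\gr(c_1(X))))$; combining this with the identification $\forget(\gr(c_1(X))) \simeq \forget(X)$ from the first step gives the desired equivalence $\gr(c_1(X)) \simeq \triv(\forget(X))$. Naturality in $X$ is inherited throughout: from the commutative diagrams of the construction, the functoriality of the cofibre, and the fact that the equivalence in \Cref{deg1triv} arises from the unit of an adjunction.

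I expect the only delicate point to be the bookkeeping of the various forgetful functors — concretely, matching ``$\forget$ of the graded $\mathcal{O}$-algebra $\gr(c_1(X))$'', meaning its degree-$1$ spectrum under the equivalence $\Alg_{\mathcal{O}}(\Sp^{\gr})_1 \simeq \Sp$ established in the proof of \Cref{deg1triv}, with ``$\forget$ of the $\mathcal{O}$-algebra $X$''. This is exactly what the first paragraph establishes, and with it in place the corollary is immediate; the rest is a routine diagram chase through the functors $c_1$, $\gr$, $\triv$ and $\forget$.
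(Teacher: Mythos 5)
Your proposal is correct and follows exactly the paper's route: the paper's own proof is the one-line observation that $\gr(c_1(X))$ is concentrated in degree $1$, so that \Cref{deg1triv} applies. You have simply filled in the (routine but worthwhile) details of why the underlying graded spectrum is $\forget(X)$ placed in degree $1$, using the compatibility of $c_1$ and $\gr$ with the forgetful functors recorded in the construction.
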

\begin{proof}This follows immediately from \Cref{deg1triv} as $\gr(c_1(X))$ is concentrated in degree $1$.
\end{proof}

Fix a morphism of spectral operads $ \beta:  \mathcal{P} \rightarrow \mathcal{Q}.$
 The induced functor
$\beta^\ast: \Alg_{\mathcal{Q}}(\Sp) \rightarrow  \Alg_{\mathcal{P}}(\Sp)$ admits a left adjoint 
 $\beta_!: \Alg_{\mathcal{P}}(\Sp) \rightarrow  \Alg_{\mathcal{Q}}(\Sp),$ which we call the pushforward along $\beta$, and  we denote the corresponding functors on filtered and graded algebras by the same name. Note that they   are compatible with the functors $\gr$ and $\colim$.

\begin{proposition}\label{pushgr}
Given a $\mathcal{P}$-algebra $X$, the $\mathcal{Q}$-algebra $\beta_!(X)$ admits an exhaustive filtration  $\beta_!(c_1(X))$ 
 with associated graded  $\beta_{!}(\triv(\forget(X)))$.
\end{proposition}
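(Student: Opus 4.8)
The plan is to produce the filtration by applying the filtered pushforward functor to the natural filtration $c_1(X)$ and then to read off its colimit and associated graded from the compatibilities recorded just before the statement. Thus the filtration in question is simply $\beta_!(c_1(X)) \in \Alg_{\mathcal{Q}}(\Sp^{\Fil})$, which is visibly natural in $X$; here, and throughout, we use that $\mathcal{P}$ is reduced, as is implicit in the appearance of $\triv$ in the statement.

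First I would identify the colimit. Since $\beta_!$ is compatible with $\colim$, and since the top row of the last commutative square in the ``Adding filtration'' construction exhibits an equivalence $\colim\circ\, c_1 \simeq \id$ on $\Alg_{\mathcal{P}}(\Sp)$, we obtain natural equivalences
\[
\colim\bigl(\beta_!(c_1(X))\bigr)\;\simeq\;\beta_!\bigl(\colim(c_1(X))\bigr)\;\simeq\;\beta_!(X),
\]
so that $\beta_!(c_1(X))$ is an exhaustive filtration on $\beta_!(X)$ in the sense used in this paper. Next, using that $\beta_!$ is also compatible with $\gr$, together with \Cref{cortrivgrad} (which gives $\gr(c_1(X))\simeq\triv(\forget(X))$ as graded $\mathcal{P}$-algebras), we compute
\[
\gr\bigl(\beta_!(c_1(X))\bigr)\;\simeq\;\beta_!\bigl(\gr(c_1(X))\bigr)\;\simeq\;\beta_!\bigl(\triv(\forget(X))\bigr),
\]
which is the asserted description of the associated graded.

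The only step that is not pure bookkeeping is the compatibility of $\beta_!$ with $\gr$ and $\colim$, which is exactly the assertion made in the sentence preceding the proposition. It holds because $\gr\colon\Sp^{\Fil}\to\Sp^{\gr}$ and $\colim\colon\Sp^{\Fil}\to\Sp$ are symmetric monoidal and colimit-preserving, hence induce monoidal functors on symmetric sequences intertwining the composition products; forming algebras and then passing to the left adjoint along the fixed operad map $\beta$ therefore commutes with them. (One may also verify this directly on free $\mathcal{P}$-algebras, where the pushforward is computed termwise by $\bigoplus_n(\mathcal{Q}(n)\otimes(-)^{\otimes n})_{h\Sigma_n}$ and both $\gr$ and $\colim$ preserve the relevant coproducts, tensor products and geometric realisations.) I do not expect a genuine obstacle here: the proposition is a formal consequence of \Cref{cortrivgrad} and the monoidality of $c_1$, $\gr$ and $\colim$, and the only care needed is to track which $\infty$-category each functor is applied in.
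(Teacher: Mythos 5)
Your proof is correct and follows essentially the same route as the paper's: apply $\beta_!$ to the filtered algebra $c_1(X)$, use compatibility of $\beta_!$ with $\colim$ to see the filtration is exhaustive, and use compatibility with $\gr$ together with \Cref{cortrivgrad} to identify the associated graded. The extra justification you give for the compatibility of $\beta_!$ with $\gr$ and $\colim$ is a reasonable elaboration of what the paper merely asserts in the sentence preceding the proposition.
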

\begin{proof} 
To verify that this filtration is indeed exhaustive, we compute 
	$$  \colim  \beta_!(c_1(X)))\simeq 
\beta_!(\colim c_1(X))) 	
 \simeq 	\beta_!(X). $$
Using \Cref{cortrivgrad}, we obtain
$ \gr(\beta_!(c_1(X))) ) \simeq  \beta_!(\gr(c_1(X))) ) \simeq \beta_{!}(\triv(\forget(X))).$
\end{proof}
We will now establish analogues of the PBW theorem.
Recall that 
the universal enveloping algebra functor $$U: \Alg_{ \mathbb{L} }(\Sp) \rightarrow\Alg_{ \mathbb{E}_1}(\Sp)$$ 
is  obtained by composing the equivalence $\Alg_{s^n\mathbb{L}}(\Sp)$ and $\Alg_{\mathbb{L}}(\Sp)$  with the 
pushforward $\beta_!$ along the  morphism of operads 
$\beta:  s\mathbb{L} \rightarrow  \mathbb{E}_1,$
which is Koszul dual to the natural  inclusion $\mathbb{E}_1 \rightarrow \mathbb{E}_\infty$.

We can now prove \Cref{PBWfull}, the PBW theorem for spectral Lie algebras.
\begin{proof}[Proof of \Cref{PBWfull}]
 \Cref{pushgr} gives an exhaustive filtration $ U (c_1(\mathfrak{g}))$ with associated graded $ U (\triv (\forget(\mathfrak{g}))). $
For $X = \forget(\mathfrak{g})$, we have equivalences 
\[U(\triv(X)) \simeq U (|\Barc_\bullet(  \mathbb{L},   \mathbb{L}, \triv(X))|) \simeq |\Barc_\bullet(\E_1,s \mathbb{L}, \triv(\Sigma^{-1}X))| \simeq (\mathbb{E}_1 \circ_{s \mathbb{L}} \mathbf{1})(\Sigma^{-1}X),\] and so the claim  follows from \Cref{PBWtriv}.
\end{proof}

We can also construct envelopes of $\E_n$-algebras as follows:
\begin{definition}[Relative  envelope] \label{relenv}
	Given   $0 \leq m \leq n$, 
	the \textit{relative  enveloping algebra functor} $$ U_{n,m}: \Alg_{ \mathbb{E}_n }(\Sp) \rightarrow\Alg_{ s^{n-m}\mathbb{E}_{m} }(\Sp)$$ is  given by the pushforward along the morphism of operads 
	$  \mathbb{E}_n \rightarrow s^{n-m} \mathbb{E}_{m}$,
	which is Koszul dual to the inclusion $\mathbb{E}_{m} \rightarrow \mathbb{E}_n$.\end{definition}

We conclude by proving  the PBW theorem for $\mathbb{E}_n$-algebras.
\begin{proof}[Proof of \Cref{PBWEn}]
	This follows from \Cref{prop:compositionEnPBW} by the same argument as in the proof of \Cref{PBWfull}.
\end{proof}
\bibliographystyle{amsalpha}
\bibliography{biblio}

\end{document}